\documentclass[12pt]{amsart}

\usepackage{amssymb, amsmath}
\usepackage{amsthm}
\usepackage{mathrsfs}
\usepackage[pdfborder={0 0 0 [0 0 ]},bookmarksdepth=3, bookmarksopen=true,unicode=true]{hyperref}
\usepackage{cite}
\usepackage{enumitem}
\usepackage{cases}
\usepackage[font={it}, margin=1cm]{caption}
\usepackage{verbatim}
\usepackage[capitalize]{cleveref}
\usepackage{color}
\usepackage[ps,all,arc,rotate]{xy}
\usepackage{graphicx}

\newtheorem{theorem}{Theorem}[section]
\newtheorem{question}[theorem]{Question}
\newtheorem{definition}[theorem]{Definition}
\newtheorem{lemma}[theorem]{Lemma} 
\newtheorem{proposition}[theorem]{Proposition} 
\newtheorem{thmletter}{Theorem}

\newtheorem{corollary}[theorem]{Corollary} 
\newtheorem{corolletter}[thmletter]{Corollary} 

\theoremstyle{definition}
\newtheorem{example}[theorem]{Example}
\newtheorem{rmk}[theorem]{Remark}

\numberwithin{figure}{section}

\makeatletter
\let\c@figure\c@theorem
\makeatother

\newcommand{\p}[1]{\noindent {\newline\bf #1.}}

\newcommand{\aut}{\operatorname{Aut}}
\newcommand{\BS}{\operatorname{BS}}

\newcounter{dawidcomments}

\newcounter{alancomments}

\newcounter{gilescomments}

\newcounter{nameOfYourChoice}

\title{Algebraically hyperbolic groups}
\author{Giles Gardam}
\address{
Mathematisches Institut, Universit\"at Bonn, Endenicher Allee 60, 53115 Bonn,
Germany}
\email{gardam@math.uni-bonn.de}

\author{Dawid Kielak}
\address{Mathematical Institute, University of Oxford, Andrew Wiles Building,
Radcliffe Observatory Quarter,
Woodstock Road,
Oxford
OX2 6GG,
United Kingdom}
\email{kielak@maths.ox.ac.uk}

\author{Alan D. Logan}
\address{
Heriot-Watt University, Edinburgh, EH14 4AS,
UK}
\email{A.Logan@hw.ac.uk}
\subjclass[2020]{%
20E06,
20E08,
20F65,
20F67%
}

\keywords{Algebraically hyperbolic, cohomological dimension, CSA, JSJ decomposition.}

\begin{document}
\maketitle

\begin{abstract}
We initiate the study of torsion-free algebraically hyperbolic groups; these groups generalise torsion-free hyperbolic groups and are intricately related to groups with no Baumslag--Solitar subgroups.
Indeed, for groups of cohomological dimension $2$ we prove that algebraic hyperbolicity is equivalent to containing no Baumslag--Solitar subgroups.
This links algebraically hyperbolic groups to two famous questions of Gromov; recent work has shown these questions to have negative answers in general, but they remain open for groups of cohomological dimension $2$.

We also prove that algebraically hyperbolic groups are CSA, and so have canonical abelian JSJ-decompositions. In the two-generated case we give a precise description of the form of these decompositions.
\end{abstract}

\section{Introduction}
\label{introduction}
The notion of a hyperbolic group, introduced by Gromov in his famous essay \cite{Gromov1987Essay}, embodies a paradigm shift in the theory of infinite discrete groups which brought geometric ideas to the forefront.
Hyperbolic groups can be also defined using more combinatorial terms (the Dehn function), but it is less clear if they can be characterised by their group theoretic or topological properties, like subgroup structures and existence of finite classifying spaces.
We are motivated by two questions in this vein.
A group is \emph{$\BS$-free} if it contains no Baumslag--Solitar subgroups $\BS(m, n):=\langle a, t\mid t^{-1}a^mt=a^n\rangle$.
Torsion-free hyperbolic groups are $\BS$-free and have finite classifying spaces.
\begin{question}
\label{Question:1}
Let $G$ be a group with a finite classifying space of dimension at most $3$. If $G$ is $\BS$-free, is it hyperbolic?
\end{question}
Let us introduce the following notion: a group $G$ is \emph{weakly algebraically hyperbolic (weakly AH)} if it contains no $\mathbb{Z}^2$ subgroups, and for every non-trivial $x\in G$ if the elements $x^m$ and $x^n$ are conjugate then $|m|=|n|$.
Torsion-free hyperbolic groups are weakly AH.
\begin{question}
\label{Question:2}
Let $G$ be a group with a finite classifying space of dimension at most $3$. If $G$ is weakly AH, is it hyperbolic?
\end{question}
Variations on these two questions have driven research in Geometric Group Theory since the 1990s.
In particular, Gersten asked if every $\BS$-free one-relator group is hyperbolic \cite[Remark, p. 734]{Allcock1999homological}, which reduces to a sub-question of Question \ref{Question:1}, while both Questions \ref{Question:1} and \ref{Question:2} have positive answers for $3$-manifold groups, by Perelman's Geometrisation theorem, for free-by-cyclic groups \cite{Brinkmann2000hyperbolic}, for ascending HNN-extensions of free groups \cite{Mutanguha2021dynamics}, and for fundamental groups of special cube complexes \cite{Caprace2009geometric} (without restriction on the dimension).

A group is of \emph{type ${\mathtt{F}}$} if it has a finite classifying space, while the \emph{geometric dimension} of a group $G$ is the minimal dimension of a classifying space for $G$.
Therefore, the above questions are for groups that are of type $\mathtt{F}$ and have geometric dimension at most $3$ (and indeed, one can show conversely using Schanuel's lemma that a group has finite classifying spaces of dimension at most $3$ if it satisfies the two conditions).
These two conditions are necessary.
Brady constructed a finitely presented, non-hyperbolic group that embeds into a hyperbolic group \cite{Brady1999Branched} (see also \cite{Lodha2018Hyperbolic,Kropholler2021subgroups,Llosa2021hyperbolic,Llosa2024hyperbolic}).
This example is both weakly AH and $\BS$-free, but is not of type $\mathtt{F}$.

The versions of Questions \ref{Question:1} and \ref{Question:2} attributed to Gromov \cite[Questions 1.1 \& 1.2]{Bestvina2004questions}, since they appeared in print after Brady's example, are formulated within the realm of groups of type $\mathtt F$.
Recently, Italiano, Martelli and Migliorini constructed a non-hyperbolic group $G$ of type $\mathtt F$ that embeds into a hyperbolic group \cite[Corollary 2]{Italiano2023hyperbolic}, which is therefore a counter-example to both of Gromov's questions.
The group $G$ they construct has geometric and cohomological dimension $4$ (note that every group of cohomological dimension $n \geqslant 3$ has geometric dimension $n$ as well), and so we see that the bound on geometric dimension in Questions \ref{Question:1} and \ref{Question:2} is also necessary.
Therefore, Gromov's general questions do not hold, but Questions \ref{Question:1} and \ref{Question:2} remain open.

\p{Algebraically hyperbolic groups}
Motivated by Questions \ref{Question:1} and \ref{Question:2}, and by the fact that the classes of type $\mathtt F$ groups that are either $\BS$-free or weakly AH contain the class of torsion-free hyperbolic groups properly (by Italiano--Martelli--Migliorini's result), and so deserve to be studied in their own right, we introduce and study the class of ``torsion-free algebraically hyperbolic'' (AH) groups.
(All the groups studied in this paper are torsion-free, for example groups of type $\mathtt F$ are torsion-free, so we often omit the ``torsion-free'' phrasing.)
Recall that a group $G$ is \emph{commutative-transitive} if commutativity is a transitive relation on the non-identity elements of $G$ (so for $g, h, k\neq1$, if $[g,h]=1$ and $[h, k]=1$ then $[g, k]=1$).
\begin{definition}
\label{def:ah}
A torsion-free group $G$ is \emph{algebraically hyperbolic (AH)} if
\begin{enumerate}[label=(\alph*)]
\item\label{AHdef:CT} $G$ is commutative-transitive, and
\item\label{AHdef:BS} $G$ contains no subgroup of the form $H\rtimes\mathbb{Z}$ with $H$ non-trivial locally cyclic.
\setcounter{nameOfYourChoice}{\value{enumi}}
\end{enumerate}
\end{definition}

Basic properties and examples of AH groups are given in \cref{sec:definition}.
We later prove the following.

\begin{thmletter}[Theorem \ref{thm:inclusions}]
\label{thm:inclusions:intro}
We have the following chain of proper inclusions of classes of groups:
\[\text{AH}\subset\text{weakly AH}\subset\text{$\BS$-free}.\]
\end{thmletter}
Our focus is on proving hyperbolic-like properties, thus giving evidence towards Questions \ref{Question:1} and \ref{Question:2}, and on understanding the connections between the classes, as the constructions of Italiano--Martelli--Migliorini do not prove any relationship between the classes of $\BS$-free and weakly AH groups of type $\mathtt F$, only that each properly contains the class of torsion-free hyperbolic groups.

\p{AH vs weakly AH vs {\boldmath $\BS$}-free}
Despite the properness of the inclusions $\text{AH}\subseteq\text{weakly AH}\subseteq\text{$\BS$-free}$ in general,
under certain natural conditions they {are} equivalent.

Our next theorem states that all three definitions coincide for groups of cohomological dimension $2$.
The class of groups of cohomological dimension $2$ sits between the classes of groups of geometric dimension $2$ and geometric dimension at most $3$ (the Eilenberg--Ganea Conjecture deals with the ``properness'' of the first of these inclusions).
Torsion-free one-relator groups, as well as many of the above-mentioned classes of groups with positive answers to Questions \ref{Question:1} and \ref{Question:2}, have cohomological dimension $2$.

\begin{thmletter}[Theorem \ref{thm:coHom2Main}]
\label{thm:coHom2Main:intro}
Let $G$ be a group of cohomological dimension $2$. Then the following are equivalent.
\begin{enumerate}
\item $G$ is AH.
\item $G$ is weakly AH.
\item $G$ is $\BS$-free.
\end{enumerate}
\end{thmletter}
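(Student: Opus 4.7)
The inclusions $(1) \Rightarrow (2) \Rightarrow (3)$ hold for every torsion-free group by Theorem~\ref{thm:AHimpliesWAH}, so all the content of the theorem lies in the converse direction, where the cohomological dimension $2$ hypothesis must enter. My plan is to split the converse into $(3) \Rightarrow (2)$ and $(2) \Rightarrow (1)$.

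For $(3) \Rightarrow (2)$, the no-$\mathbb{Z}^2$ clause of weakly AH is automatic since $\mathbb{Z}^2 = \BS(1,1)$. For the conjugacy clause, I would argue by contradiction: suppose $t^{-1}x^m t = x^n$ holds in $G$ with $x \neq 1$ and $|m| \neq |n|$. Set $H := \langle x, t \rangle$; then $H$ is torsion-free of cohomological dimension at most~$2$, and the given relation induces a surjection $\BS(m,n) \twoheadrightarrow H$. Replacing $x$ by $x^{\gcd(m,n)}$, I may assume $\gcd(m,n) = 1$. In the easy case $\min(|m|, |n|) = 1$, the group $H$ is metabelian, and Bieri's classification of finitely generated torsion-free soluble groups of cohomological dimension at most~$2$ identifies $H$ as $\mathbb{Z}$, $\mathbb{Z}^2$, or $\BS(1, k')$ with $|k'| \geq 2$: the first contradicts $|m| \neq |n|$, and the latter two are themselves Baumslag--Solitar subgroups, contradicting $\BS$-freeness. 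The harder case is $|m|, |n| \geq 2$ coprime; here $\BS(m,n)$ is not soluble, so I would hunt for a metabelian subgroup of $H$ witnessing a distinct-power conjugation (for instance, by iterating the HNN relation $t^{-k} x^{m^k} t^k = x^{n^k}$ to find an element $y$ and a conjugator $s$ with $s^{-1} y s = y^k$ and $\langle y, s \rangle$ metabelian) and then invoke Bieri as before.

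For $(2) \Rightarrow (1)$, the extra axioms for AH over weakly AH (Section~\ref{sec:definition}) concern centralisers and maximal abelian subgroups. In a torsion-free cohomological dimension $2$ group without $\mathbb{Z}^2$-subgroups, every abelian subgroup is locally cyclic, and hence every finitely generated abelian subgroup is infinite cyclic. Combined with the weakly AH restriction on the conjugacy of powers, this confines the possible behaviour of centralisers enough that the remaining AH axioms reduce to elementary verifications inside cyclic subgroups.

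The step I expect to be the main obstacle is the coprime case with $|m|, |n| \geq 2$ in $(3) \Rightarrow (2)$: one is handed only an epimorphism $\BS(m,n) \twoheadrightarrow H$, and needs to extract a genuine Baumslag--Solitar \emph{subgroup} of $H$. This is precisely the point at which the cohomological dimension $2$ hypothesis is essential, given that the analogous implication fails in cohomological dimension $4$ via the Italiano--Martelli--Migliorini construction. I anticipate that the argument here requires either the sharpened soluble reduction sketched above or a Bass--Serre theoretic analysis leveraging the HNN structure inherited from $\BS(m,n)$, using cohomological dimension $2$ to rule out the collapses of that HNN decomposition that could in principle cause the epimorphism to have non-trivial kernel.
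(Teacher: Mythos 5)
Your overall frame is right in one respect---$(1)\Rightarrow(2)\Rightarrow(3)$ is exactly Theorem~\ref{thm:AHimpliesWAH}---but your decomposition of the converse into $(3)\Rightarrow(2)$ followed by $(2)\Rightarrow(1)$ is not how the paper proceeds, and the step you yourself flag as the main obstacle is a genuine gap, not a detail. The paper proves $(3)\Rightarrow(1)$ directly (Proposition~\ref{prop:coHomBS}) and then recovers $(2)$ from $(1)$ via Theorem~\ref{thm:AHimpliesWAH}, whose proof of the power-conjugacy clause rests on the CSA theorem (Theorem~\ref{thm:CSA}): if $y^{-1}x^my=x^n$ then malnormality of the maximal abelian subgroup containing $x$ forces $y$ to commute with $x$, whence $m=n$. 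This is precisely the device that lets one handle a Baumslag--Solitar \emph{relation} without ever having to exhibit a Baumslag--Solitar \emph{subgroup}. Your route needs exactly the statement the authors explicitly say they cannot prove (see the remark after Theorem~\ref{thm:SecondClassification}): that $t^{-1}x^mt=x^n$ forces $\langle x,t\rangle$ to contain a poison subgroup. Your easy case ($\min(|m|,|n|)=1$, so $\langle x,t\rangle$ is metabelian and the soluble classification applies) is fine, but in the coprime case with $|m|,|n|\geq 2$ you only have an epimorphism $\BS(m,n)\twoheadrightarrow H$, and the iterated relation $t^{-k}x^{m^k}t^k=x^{n^k}$ never has the single-power form $s^{-1}ys=y^k$; the standard $\mathbb{Z}^2$ inside $\BS(m,n)$ (generated by $t^{-1}xt$ and $x^n$) need not survive in the quotient $H$. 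Nothing in your sketch closes this.

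The $(2)\Rightarrow(1)$ step is also under-resolved. The content of commutative-transitivity here is not an ``elementary verification inside cyclic subgroups'': given $[g,z]=[h,z]=1$, one forms $K=\langle g,h,z\rangle$ and finds that $K/\langle z\rangle$ is a torsion group, and a priori this could be an infinite torsion group (this is exactly the pathology behind Question~\ref{Qn:WAHvsAH}). Ruling it out is where cohomological dimension $2$ genuinely enters, via Bieri's theorem that a group of cohomological dimension at most $2$ with an infinite cyclic normal subgroup has free-by-finite quotient; a torsion free-by-finite group is finite, so $K$ is virtually $\mathbb{Z}$ and hence cyclic. (The no-$H\rtimes\mathbb{Z}$ clause similarly uses the Gildenhuys classification of soluble groups of cohomological dimension $2$, which you do invoke in spirit.) In short: the theorem is true by the paper's argument, but your proposal as written leaves both halves of the converse resting on unproved claims, and the cleanest repair is to reorder the implications so that the CSA machinery, rather than a direct subgroup hunt, disposes of the conjugate-powers condition.
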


This means that for groups of cohomological dimension $2$, Questions \ref{Question:1} and \ref{Question:2} are equivalent.
Dimension $2$ is sharp here, as there exist $\BS$-free groups of cohomological, and hence geometric, dimension $3$ which are not weakly AH (Theorem \ref{thm:coHom3Main}).

The discussion so far has focused on topological properties (cohomological dimension, classifying spaces), but we now turn to two algebraic properties.
A group $G$ is \emph{locally indicable} if every non-trivial finitely generated subgroup $K$ of $G$ surjects onto the infinite cyclic group, $K\twoheadrightarrow \mathbb{Z}$.
Examples of such groups include torsion-free one-relator groups, and more generally one-relator products of locally indicable groups \cite{Howie1982Locally}, and bi-ordered groups \cite{Rhemtulla2002suspect}.
A group $G$ is \emph{residually finite} if the intersection of all its finite-index subgroups is trivial.
Examples of such groups include finitely generated linear groups \cite{mal1940faithful} and finitely generated free-by-cyclic groups \cite{Baumslag1971Finitely}.
It is a famous conjecture that all hyperbolic groups are residually finite (see for example \cite{Kapovich2000equivalence}).

\begin{thmletter}[Theorem \ref{thm:LI}]
\label{thm:LI:intro}
Let $G$ be a locally indicable group or a torsion-free residually finite group.
Then $G$ is weakly AH if and only if $G$ is AH.
\end{thmletter}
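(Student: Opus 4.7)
The implication AH $\Rightarrow$ weakly AH is Theorem~\ref{thm:AHimpliesWAH}, so only the converse requires proof. Assume $G$ is locally indicable and weakly AH; we must verify the additional clauses in the definition of AH.

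The plan opens with a ``no-inversion'' lemma requiring only the torsion-freeness and no-$\mathbb{Z}^2$ hypotheses that are already built into weakly AH: no non-trivial $x \in G$ is conjugate to $x^{-1}$. Suppose $gxg^{-1} = x^{-1}$ with $x \neq 1$. Then $g^2$ commutes with $x$, so the finitely generated torsion-free abelian subgroup $\langle g^2, x \rangle$ has no $\mathbb{Z}^2$ and is thus infinite cyclic, say $\langle r \rangle$, with $g^2 = r^a$ and $x = r^b$. Then $g^{2b} = r^{ab} = x^a$, and conjugating by $g$ yields $x^a = g x^a g^{-1} = (gxg^{-1})^a = x^{-a}$, hence $x^{2a} = 1$. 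By torsion-freeness either $x = 1$ (contradiction) or $a = 0$; the latter forces $g^2 = 1$, hence $g = 1$, hence $x = x^{-1}$, again contradicting $x \neq 1$. Combined with the weakly AH hypothesis this immediately promotes the condition $|m|=|n|$ to $m=n$: if $gx^mg^{-1} = x^{-m}$ with $x \neq 1$ and $m \neq 0$, then $x^m$ is non-trivial and conjugate to its inverse, a contradiction.

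The second phase invokes local indicability to establish the remaining structural conditions of AH, chiefly a CSA property: maximal abelian subgroups are malnormal. Any abelian subgroup of $G$ is torsion-free and has no $\mathbb{Z}^2$, hence has rank at most one. The key step is to show that centralisers of non-trivial elements are abelian. For $y, z \in C_G(x)$, the subgroups $\langle y, x \rangle$ and $\langle z, x \rangle$ are cyclic, generated by roots $r_y, r_z$ of powers of $x$, and showing that $r_y$ and $r_z$ commute reduces to a uniqueness-of-roots question. Here local indicability is deployed by applying the indicable surjection of the finitely generated subgroup $\langle r_y, r_z \rangle$ onto $\mathbb{Z}$ and combining it with the no-$\mathbb{Z}^2$ hypothesis and the no-inversion lemma above. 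Malnormality of maximal abelian subgroups then follows from the strengthened conjugacy condition by an analogous analysis.

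The main obstacle lies in the uniqueness-of-roots step. Local indicability alone provides left-orderability, which does not imply unique $n$-th roots in general: for instance, the Klein bottle group is locally indicable yet contains $(ab)^2 = a^2$ with $ab \neq a$. What rules out such pathologies in our setting is precisely the absence of $\mathbb{Z}^2$ subgroups, and weaving this constraint together with the indicable surjections on the relevant finitely generated subgroups is the technical heart of the proof; the no-inversion lemma of the first phase handles only the $n=2$ case directly, and the general argument requires care.
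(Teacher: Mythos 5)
Your first phase (no non-trivial element of a torsion-free group without $\mathbb{Z}^2$ subgroups is conjugate to its inverse) is correct but peripheral. The genuine gap is in your second phase, at exactly the point where the theorem's content lies. You correctly reduce the problem to commutative-transitivity: given $y,z\in C_G(x)$ with $x\neq 1$, the subgroups $\langle y,x\rangle=\langle r_y\rangle$ and $\langle z,x\rangle=\langle r_z\rangle$ are cyclic, and one must show $r_y$ and $r_z$ commute. But there you recast the problem as a ``uniqueness-of-roots question'', observe (rightly) that local indicability does not give unique roots, concede that your no-inversion lemma only handles exponent $2$, and leave the general case as ``the technical heart of the proof''. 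That is, the crucial step is not proved, and no workable strategy for it is supplied; as stated, the proposal does not establish the theorem. (Two smaller points: CSA is not part of the definition of AH, so you need not prove malnormality, only commutative-transitivity; and the absence of $H\rtimes\mathbb{Z}$ subgroups with $H$ locally cyclic already follows from weak AH alone, since $t^{-1}x^mt=x^{\pm m}$ yields $\langle t^2,x^m\rangle\cong\mathbb{Z}^2$, with no need for your no-inversion lemma.)

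The step is closed not by comparing roots but by analysing the whole two-generated subgroup $K=\langle r_y,r_z\rangle$, which is how the paper proceeds (via Lemma \ref{lem:WAHvsAH}). The element $x$ is central in $K$, being a power of each generator; hence for every $k\in K\smallsetminus\{1\}$ the subgroup $\langle x,k\rangle$ is finitely generated, torsion-free, abelian and without $\mathbb{Z}^2$, hence cyclic, so some non-zero power of $k$ lies in $\langle x\rangle$ and $K/\langle x\rangle$ is torsion. Local indicability gives a surjection $\phi\colon K\twoheadrightarrow\mathbb{Z}$; since $\mathbb{Z}$ is not a quotient of the torsion group $K/\langle x\rangle$, we have $\phi(x)\neq 0$, so $\ker\phi\cap\langle x\rangle=1$, and then any $k\in\ker\phi$ satisfies $k^i\in\ker\phi\cap\langle x\rangle=1$ for some $i\neq 0$, forcing $k=1$ by torsion-freeness. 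Thus $K\cong\mathbb{Z}$, so $y$ and $z$ commute. In the paper this is packaged as: a torsion-free weakly AH group fails to be AH only if it contains a two-generated subgroup with finite abelianisation which is cyclic-by-(infinite torsion) (Lemma \ref{lem:WAHvsAH}), and local indicability excludes such subgroups because every non-trivial finitely generated subgroup has infinite abelianisation; together with Theorem \ref{thm:AHimpliesWAH} this proves Theorem \ref{thm:LI}. Your sketch sets up the right subgroup but stops precisely where this argument does the work.
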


Any group which is weakly AH but not AH must contain a finitely generated subgroup $K$ which is a cyclic extension of an infinite torsion group (see Theorem \ref{thm:wAHclassification}).
Therefore, proving that there exists a weakly AH group of type $\mathtt F$ which contains such a subgroup $K$ would provide a new, stronger counter-example to Gromov's version of Question \ref{Question:2}, mentioned above, in the sense that the counter-example would not embed into a hyperbolic group.
We however suspect that no weakly AH group of type $\mathtt F$ contains such a subgroup, as this would follow from a positive answer to the following question.

\begin{question}
\label{qn:WAHvsAH}
Let $K$ be a finitely generated cyclic extension of an infinite torsion group.
Is the cohomological dimension of  $K$ necessarily infinite?
\end{question}

If \cref{qn:WAHvsAH} has a positive answer, then a group of type $\mathtt F$ is weakly AH if and only if it is AH (see Corollary \ref{corol:AnswerQn}).

Suppose that $K$ above is the example constructed by Adian~\cite{Adian1971Certain}, that is a central extension of a free Burnside group on at least $2$ generators of odd exponent greater than $666$.
Using the Lyndon--Hochschild--Serre spectral sequence and a result of Mislin--Talelli \cite[Lemma 4.12(ii)]{MislinTalelli2000} one easily sees that  $H^n(K;\mathbb Z K)$ is trivial for all $n>3$. As is shown below, the cohomological dimension of $K$ cannot be less than $3$, and so the cohomological dimension of Adian's $K$ is either $3$ or $\infty$.

\p{The CSA property}
We prove that AH groups satisfy a strong hyperbolic-like property:
A subgroup $M$ of a group $G$ is \emph{malnormal} if $M^g\cap M=1$ for all $g\in G\smallsetminus M$.
A group $G$ is \emph{CSA} (for \emph{conjugacy separated abelian}) if all maximal abelian subgroups of $G$ are malnormal in $G$.
Torsion-free hyperbolic groups are CSA, and this property played a central role in the resolution of Tarski's problem by Kharlampovich--Myasnikov and Sela.
Guirardel--Levitt have recently developed a theory of JSJ-decompositions for finitely generated CSA groups \cite{Guirardel2017JSJ};
JSJ-decompositions are a powerful tool in Geometric Group Theory, underlying for example the aforementioned proofs of Tarski's problem as well as the resolution of the isomorphism problem for hyperbolic groups.

\begin{thmletter}[Theorem \ref{thm:CSA}]
\label{thm:CSAMain:intro}
Torsion-free AH groups are CSA.
\end{thmletter}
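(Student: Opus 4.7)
The plan is to reduce CSA to commutative transitivity (CT), the property that centralizers of non-trivial elements are abelian; I expect this to follow from the definition of AH given in Section \ref{sec:definition}. Granting CT, for any maximal abelian subgroup $A$ and any non-trivial $a \in A$ one has $A = C_G(a)$: the inclusion $A \subseteq C_G(a)$ holds since $A$ is abelian, while $C_G(a)$ is abelian by CT, so maximality of $A$ forces equality. Hence if $g \in G$ satisfies $A \cap A^g \neq 1$, picking a non-trivial $a$ in the intersection gives $A = C_G(a) = A^g$. Thus $g$ normalizes $A$ and induces an automorphism $\phi \in \aut(A)$.

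Since $G$ is torsion-free and contains no $\mathbb{Z}^2$ (as AH implies weakly AH), the abelian subgroup $A$ is locally cyclic. For any $a \in A$ we may therefore write $a = c^m$ and $\phi(a) = c^n$ with $c \in A$. Conjugacy of $a$ and $\phi(a)$ in $G$, combined with the weakly AH property, forces $|m| = |n|$, so $\phi(a) \in \{a, a^{-1}\}$. A brief homomorphism argument, together with torsion-freeness of $A$, rules out the possibility that $\phi$ fixes one non-trivial element while inverting another: if $\phi(a) = a$ and $\phi(b) = b^{-1}$ then $\phi(ab) = ab^{-1}$ must also equal $ab$ or $(ab)^{-1}$, which in the abelian group $A$ yields $b^2 = 1$ or $a^2 = 1$. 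Hence $\phi$ is either the identity on $A$ or global inversion on $A$.

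If $\phi = \mathrm{id}_A$, then $\langle A, g\rangle$ is abelian, so $g \in A$ by maximality. If $\phi$ is global inversion, then $g^2$ centralizes $A$ and hence lies in $A$; but applying $\phi$ to $g^2$ yields both $g^2$ (since $g$ commutes with its own powers) and $g^{-2}$ (by inversion), so $g^4 = 1$ and $g = 1 \in A$ by torsion-freeness. Either way $g \in A$, proving that $A$ is malnormal. The main obstacle is the CT step: extracting commutative transitivity from the definition of AH, which presumably requires that AH rule out the Baumslag--Solitar-like or $\mathbb{Z}^2$-like configurations that would otherwise arise inside a non-abelian centralizer. Once CT is secured, the remainder is a clean manipulation using only the weakly AH property and torsion-freeness, mirroring familiar arguments in the torsion-free hyperbolic setting.
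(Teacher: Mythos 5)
Your reduction of malnormality to the statement ``$A \cap A^g \neq 1$ forces $g$ to normalise $A$'' is correct, and is in fact a cleaner opening than the paper's own (which works inside $B=\langle x,y\rangle$ with $H=C_B(x)$ and must separately prove root-closedness and normality of $H$). Note also that commutative transitivity is not an obstacle at all: it is condition \ref{AHdef:CT} of the definition of AH, so the step you flag as the ``main obstacle'' is free. The genuine gap is elsewhere: in deducing $\phi(a)\in\{a,a^{-1}\}$ you invoke ``the weakly AH property'' (that $x^m$ conjugate to $x^n$ forces $|m|=|n|$), justified by ``AH implies weakly AH''. But in the paper that implication is Theorem \ref{thm:AHimpliesWAH}(\ref{AHimpliesWAH:1}), and its proof consists precisely of citing the CSA theorem you are trying to prove; the weakly AH property is not part of the definition of AH (only conditions \ref{AHdef:CT} and \ref{AHdef:BS} are), and establishing it directly from those conditions is essentially the same work as establishing CSA. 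As written, your argument is therefore circular.

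The good news is that the gap closes without the automorphism analysis at all. Once $g$ normalises $A$, the quotient $\langle A,g\rangle/A$ is cyclic, generated by the image of $g$. If it is infinite cyclic, then $\langle g\rangle\cap A=1$ and $\langle A,g\rangle\cong A\rtimes\mathbb{Z}$ with $A$ non-trivial and locally cyclic, contradicting condition \ref{AHdef:BS}. If it is finite of order $k$, then $g^k\in A$, and torsion-freeness gives $g^k\neq 1$ (assuming $g\neq 1$); since $g$ commutes with $g^k$ while $g^k$ commutes with every element of $A$, commutative transitivity forces $g$ to centralise $A$, whence $\langle A,g\rangle$ is abelian and $g\in A$ by maximality. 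This is in substance the paper's case analysis on $B/H$, transplanted into your simpler setting where normalisation is already known.
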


In Section \ref{sec:splittings}, we build on this result to classify the abelian JSJ-decompositions of $2$-generated AH groups.
This classification plays a foundational role in a subsequent paper of the authors, linking JSJ decompositions and ``Friedl--Tillmann polytopes'' (a group-theoretic analogue of the Thurston norm) for certain $\BS$-free one-relator groups \cite{gardam2021jsj}.

Gildenhuys, Kharlampovich and Myasnikov proved that $\BS$-free tor\-sion-free one-relator groups are CSA \cite[Theorem 7]{Gildenhuys1995CSA}.
Torsion-free one-relator groups have cohomological dimension $2$, and so Theorems \ref{thm:coHom2Main:intro} and \ref{thm:CSAMain:intro} combine to prove the following faithful generalisation of their result.
(This corollary comes before Theorem \ref{thm:coHom2Main:intro} in the paper, as it does not need the full statement of Theorem \ref{thm:coHom2Main:intro}).

\begin{corolletter}[Corollary \ref{corol:codimCSA}]
\label{corol:codimCSA:intro}
$\BS$-free groups of cohomological dimension $2$ are CSA.
\end{corolletter}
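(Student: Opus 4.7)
The plan is to derive the corollary as a two-step application of results stated above. If $G$ is a $\BS$-free group of cohomological dimension~$2$, then Theorem~\ref{thm:coHom2Main:intro} gives that $G$ is AH, and since $G$ is torsion-free (as are all groups in this paper), Theorem~\ref{thm:CSAMain:intro} then gives that $G$ is CSA. In this sense the corollary is an immediate combination of the two theorems above.

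Because the corollary is to be positioned in the body of the paper before Theorem~\ref{thm:coHom2Main:intro}, only the implication $\BS$-free $\Rightarrow$ AH (in cohomological dimension~$2$) is actually needed, and I would isolate this single implication as a preliminary lemma whose proof is self-contained. Proving that lemma is the main obstacle. The strategy is to verify each defining condition of AH in turn: the absence of $\mathbb{Z}^2 = \BS(1,1)$ subgroups is immediate from being $\BS$-free, while for the remaining conditions\,---\,most notably the implication that $g^{-1}x^m g = x^n$ with $x$ non-trivial forces $|m|=|n|$\,---\,one must show that any would-be violation in a cohomological dimension~$2$ group already produces an honest Baumslag--Solitar subgroup, rather than merely a quotient of one. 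The natural tool is the structure theory of cohomological dimension~$2$ groups, which constrains abelian and soluble subgroups sharply enough that the two-generator subgroup generated by $x$ and the conjugating element must embed a Baumslag--Solitar group faithfully; ruling this out via the $\BS$-free hypothesis then gives AH.

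With the preliminary lemma in place, the corollary follows in one line: a $\BS$-free group of cohomological dimension~$2$ is AH by the lemma, hence CSA by Theorem~\ref{thm:CSAMain:intro}.
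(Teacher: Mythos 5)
Your top-level argument is exactly the paper's: the corollary is obtained by combining the implication ``$\BS$-free of cohomological dimension $2$ $\Rightarrow$ AH'' with Theorem~\ref{thm:CSAMain:intro}, and the preliminary lemma you propose to isolate is precisely Proposition~\ref{prop:coHomBS}, which the paper indeed places before the corollary and cites in its one-line proof. One caution about your sketch of that lemma, though: you describe verifying the condition ``$g^{-1}x^mg=x^n$ with $x$ non-trivial forces $|m|=|n|$'', but that is the defining condition of \emph{weakly} AH, not of AH. The definition of AH consists of (a) commutative-transitivity and (b) the absence of $H\rtimes\mathbb{Z}$ subgroups with $H$ non-trivial locally cyclic; your sketch covers (b) (any such subgroup is soluble of cohomological dimension at most $2$, hence a soluble Baumslag--Solitar group by Gildenhuys' classification), but it does not address (a), which is the substantive part. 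The paper handles commutative-transitivity by taking $g,h$ commuting with a common $z\neq 1$, observing that $K=\langle g,h,z\rangle$ has $K/\langle z\rangle$ torsion, and invoking Bieri's result that this quotient is then free-by-finite, hence finite, so that $K$ is cyclic. Without some argument of this kind your preliminary lemma, and hence the corollary, is not yet established.
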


\p{Balanced groups}
A group $G$ is \emph{balanced} if for every non-trivial $x\in G$, if the elements $x^m$ and $x^n$ are conjugate then $|m|=|n|$ (the terminology was introduced by Wise \cite{Wise2000graphs}).
Therefore, a group is weakly AH if it is balanced and contains no $\mathbb{Z}^2$-subgroups.
There are certain connections between the study of weakly AH groups presented here and the theory of balanced groups.
In particular, in an unpublished article Button also proves that the ``Gildenhuys groups'' in Theorem \ref{thm:BSvsGvsA}.(\ref{BSvsGvsA:1}) are $\BS$-free and (implicitly) commutative-transitive, but not weakly AH \cite[Example following Definition 3.4]{button2015balanced}.
Additionally, Proposition \ref{prop:residualWAH} on residual properties is similar to Proposition 2.4 of Button's article, with the difference being that we stipulate no $\mathbb{Z}^2$ subgroups in both our statement and conclusion.
Button's arguments here are rather different from ours, as they are based on an embedding of these groups into $\operatorname{SL}_2(\mathbb{C})$, which suggests an alternative path through Section \ref{sec:Characterisations}.

However, having no $\mathbb{Z}^2$ subgroups is integral to this paper.
In particular, non-positively curved variants of Questions \ref{Question:1} and \ref{Question:2}, where one asks if (low dimensional) balanced groups of type $\mathtt F$ are necessarily CAT(0) or automatic, have a negative answer, even when we restrict to the original context of Gersten's question, namely one-relator groups \cite{GardamWoodhouse19}.

\p{Outline of the paper}
In Section \ref{sec:definition} we introduce torsion-free algebraically hyperbolic (AH) groups, and we show that $\BS$-free groups of cohomological dimension $2$ are AH.
In Section \ref{sec:CSA} we prove Theorem \ref{thm:CSAMain:intro}, that AH groups are CSA.
In Section \ref{sec:WeaklyAHG} we prove the inclusion $\text{AH}\subseteq\text{weakly AH}\subseteq\text{$\BS$-free}$, and we prove Theorem \ref{thm:coHom2Main:intro}.
In Section \ref{sec:AHvsWAH} we investigate the inclusion $\text{AH}\subseteq\text{weakly AH}$, linking this to cyclic extensions of infinite torsion groups (``Adian extensions'').
In particular, we prove that this inclusion is proper, i.e.\ $\text{AH}\subset\text{weakly AH}$, we prove Theorem \ref{thm:LI:intro}, and in Corollary \ref{corol:AnswerQn} we justify the importance of Question~\ref{qn:WAHvsAH}.
In Section \ref{sec:Characterisations} we give a characterisation of AH groups in terms of ``poison'' subgroups, i.e.\ in terms of groups which can never be subgroups of AH groups (Theorem \ref{thm:SecondClassification}) and we prove Theorem \ref{thm:inclusions:intro}.
In Section \ref{sec:Residual} we consider residually weakly AH groups and fully residually AH groups.
Finally, Section \ref{sec:splittings} builds on Theorem \ref{thm:CSAMain:intro} and the CSA-property:
As AH groups are CSA, they have a JSJ-theory, and Section \ref{sec:splittings} characterises the abelian splittings (hence, the abelian JSJ decompositions) of two-generated AH groups.
The characterisation in Section \ref{sec:splittings} is entirely analogous to the situation in two-generated torsion-free hyperbolic groups, but additionally sometimes allows for non-cyclic splittings.

\p{Acknowledgments}
We are grateful to
Ashot Minasyan for multiple helpful discussions and telling us about a result of Adian which allowed us to prove the properness of the inclusion $\text{AH}\subset\text{weakly AH}$,
to Robert Kropholler for helpful correspondence on finiteness properties,
and to Jack Button for drawing the helpful parallels between our work and the theory of balanced groups.
We thank the referee for their comments that improved the paper.

This work has received funding from
the European Research Council (ERC) under the European Union's Horizon 2020 research and innovation programme (Grant agreement No. 850930),
the European Union (ERC, SATURN, 101076148),
the Deutsche Forschungsgemeinschaft (DFG, German Research Foundation) -- Project-ID 427320536 -- SFB 1442, as well as under Germany's Excellence Strategy EXC 2044--390685587 and EXC-2047/1 -- 390685813,
and from the Engineering and Physical Sciences Research Council (EPSRC), grants EP/R035814/1 and EP/S010963/1.


\section{Algebraically hyperbolic groups}
\label{sec:definition}
In this section we study basic properties and examples of algebraically hyperbolic groups, and we prove that $\BS$-free groups of cohomological dimension $2$ are algebraically hyperbolic.

\subsection{Main definition}
\label{sec:lccConditions}
All the groups studied in this article are torsion-free, so we often do not state the torsion-free condition for the sake of brevity.
Recall from \cref{def:ah} that a torsion-free group is AH if it \ref{AHdef:CT} is commutative-transitive and \ref{AHdef:BS} contains no subgroup $H \rtimes \mathbb{Z}$ with $H$ non-trivial locally cyclic.

\begin{rmk}
	Subgroups of AH groups are AH, and subgroups of weakly AH groups are weakly AH.
\end{rmk}

Condition \ref{AHdef:CT} is equivalent to centralisers of non-trivial elements being abelian.
As Condition \ref{AHdef:BS} rules out $\mathbb{Z}^2$ subgroups, centralisers -- and indeed all abelian subgroups -- are in fact locally cyclic.
Therefore, an AH group is equivalently
one satisfying Condition \ref{AHdef:BS} and:
\begin{enumerate}[label=(\alph*)]
\setcounter{enumi}{\value{nameOfYourChoice}}
\item\label{AHdef:cyclic} every non-trivial element of $G$ has locally cyclic centraliser.
\end{enumerate}
Note that by \emph{locally cyclic} we mean that non-trivial finitely generated subgroups are isomorphic to $\mathbb{Z}$, so Condition \ref{AHdef:cyclic} includes torsion-freeness.
A locally cyclic group is equivalently a subgroup of $\mathbb{Q}$ (after sending some non-trivial $g \mapsto 1 \in \mathbb{Q}$ there is a unique extension to an injective homomorphism $G \to \mathbb{Q}$).
We usually work with Conditions \ref{AHdef:CT} and \ref{AHdef:BS}.
However, sometimes it is convenient to work with Conditions \ref{AHdef:BS} and \ref{AHdef:cyclic}, as in the following lemma.

\begin{lemma}
\label{lem:cyclicCentralisers}
Let $G$ be a group such that every non-trivial element has infinite cyclic centraliser. Then $G$ is AH.
\end{lemma}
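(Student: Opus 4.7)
The plan is to verify the two defining conditions of AH separately, noting first that torsion-freeness is automatic: a non-trivial torsion element would lie in its own infinite cyclic centraliser, which has no non-trivial torsion. Condition \ref{AHdef:CT} (commutative-transitivity) is equivalent, as recorded just after the definition, to every centraliser being abelian; infinite cyclic groups are abelian, so this is immediate. Hence the only substantive task is to verify Condition \ref{AHdef:BS}.

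Suppose for contradiction that $H \rtimes \langle t \rangle \leq G$ with $H$ a non-trivial locally cyclic group. Any locally cyclic group is abelian, so fixing any non-trivial $h_0 \in H$ gives $H \leq C_G(h_0) \cong \mathbb{Z}$. Therefore $H$ is itself infinite cyclic, say $H = \langle a \rangle$, and the conjugation action of $t$ is an automorphism of $\mathbb{Z}$, forcing $tat^{-1} = a^{\varepsilon}$ with $\varepsilon \in \{+1, -1\}$.

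The remaining step in both cases is to exhibit a $\mathbb{Z}^2$ subgroup inside the cyclic group $C_G(a)$, an obvious contradiction. If $\varepsilon = +1$, then $t$ centralises $a$, and in this case $H \rtimes \langle t \rangle$ is in fact the direct product $\langle a \rangle \times \langle t \rangle \cong \mathbb{Z}^2$, which sits in $C_G(a)$. If $\varepsilon = -1$, then $t^2 a t^{-2} = a$, so $t^2 \in C_G(a)$; a direct computation in the semidirect product shows that $a$ and $t^2$ commute, have infinite order, and satisfy $\langle a \rangle \cap \langle t^2 \rangle = 1$, so they generate a copy of $\mathbb{Z}^2$ lying in $C_G(a)$. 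No serious obstacle arises; the key point is simply that $\aut(\mathbb{Z})$ is small enough that either $t$ or $t^2$ must centralise $a$, and that element together with $a$ immediately produces a forbidden $\mathbb{Z}^2$ inside a cyclic centraliser.
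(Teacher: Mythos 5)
Your proof is correct and follows essentially the same route as the paper: reduce $H$ to an infinite cyclic group via the cyclic centraliser of one of its elements, then use $\aut(\mathbb{Z})=\{\pm1\}$ to produce a $\mathbb{Z}^2$ (either $\langle a,t\rangle$ or $\langle a,t^2\rangle$) sitting inside a centraliser, contradicting cyclicity. The only cosmetic differences are that you check commutative-transitivity where the paper checks the equivalent locally-cyclic-centraliser condition, and you also note torsion-freeness explicitly, which is a reasonable extra observation.
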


\begin{proof}
Condition \ref{AHdef:cyclic} is immediate.
For Condition \ref{AHdef:BS},
any $H\rtimes\mathbb{Z}$ subgroup, with $H$ non-trivial locally cyclic, is of the form $\mathbb{Z}\rtimes\mathbb{Z}$, and hence is either $\mathbb{Z}^2$ or contains this group with index $2$.
This is impossible as centralisers are cyclic, and the result follows.
\end{proof}

\subsection{First examples}

\begin{example}
Torsion-free subgroups of hyperbolic groups are AH. As centralisers of elements are infinite cyclic \cite[Corollary III.$\Gamma$.3.10]{BH1999Metric}, this follows from Lemma \ref{lem:cyclicCentralisers}.
\end{example}

Later, in \Cref{ex:residualAH}, we generalise the above to show that limit groups of torsion-free hyperbolic groups which contain no $\mathbb{Z}^2$ subgroups are AH.

\begin{example}
A torsion-free Tarski monster, i.e. a non-cyclic group whose proper non-trivial subgroups are all isomorphic to $\mathbb{Z}$ as first constructed by Olshanskii \cite{Olshanskii79noetherian}, is AH.
\end{example}

\begin{lemma}
	Free products of AH groups are AH.
\end{lemma}
\begin{proof}
	Let $G$ be a free product of AH groups $G_i$. Then $G$ acts on the associated Bass--Serre tree $T$ (we will review the theory of groups acting on trees more fully in \cref{gogs}).
	
	
	Now let $g,h,k \in  G$ be non-trivial elements such that $h$ commutes with $g$ and $k$. If the fixed-point set of $h$ is empty, then $h$ acts hyperbolically by pushing along an axis. This axis has to be preserved by $g$ and $k$, and using the fact that the action of $G$ is free on the edges of $T$, we easily see that $g$ and $k$ have positive powers that coincide.
	
	If the fixed-point set of $h$ is non-empty, then it is a single point, and therefore the elements $g,h,$ and $k$ all lie in the same conjugate of some group $G_i$. This forces $g$ to $k$ to commute.
	
	If $G$ contains $\mathbb Z^2$, then let $g$ and $h$ be generators of such a subgroup, and let $k = gh$. Then the above argument shows that all three elements must fix a vertex, and hence they all lie in a conjugate of some $G_i$, which is impossible.
\end{proof}

\begin{figure}[h]
  \centering
  \includegraphics[width=0.7\textwidth]{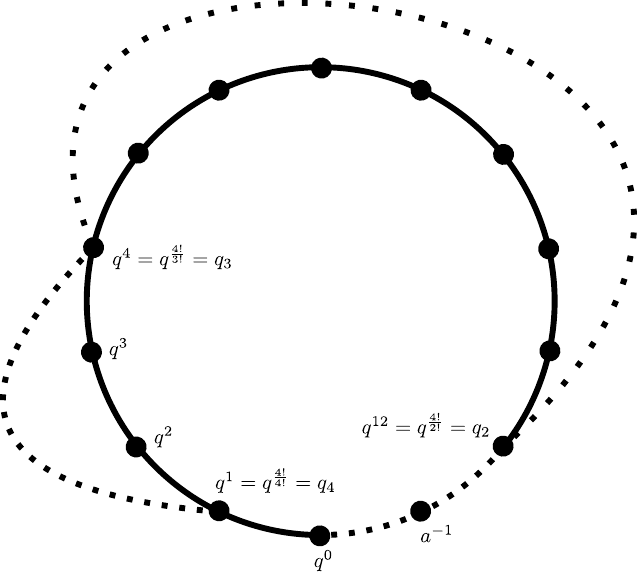}
  \caption{The graph $\Gamma_4$}
  \label{graph Gamma}
\end{figure}

\begin{example}
The free product $\mathbb{Q} * \mathbb{Q}$ is AH.
There are also finitely generated examples of AH groups that contain a non-cyclic locally cyclic subgroup; let us construct one.

Consider $\mathbb Q = \langle \{ q_i : i \geqslant 1\}  \mid {q_{i+1}}^{i+1} = q_i, i \geqslant 1 \rangle$ and let $F_2 = F(a,b)$. Let $H = \mathbb Q * F_2$, and consider two subgroups of $H$, namely
\[
A = \langle \{ q_i a^i : i \geqslant 2\}\rangle \quad \textrm{and} \quad B = \langle \{ q_i b^i : i \geqslant 3\}\rangle.
\]
Observe that both groups are freely generated by the indicated generating sets.
Let $G$ be the HNN extension of $H$ obtained by identifying $A$ and $B$ by sending $q_i a^i \mapsto q_{i+1} b^{i+1}$. The group $G$ is generated by $a,b,q_2$, and the stable letter of the extension.

We claim that $G$ is AH. To prove this, we first need to argue that $A$ is malnormal in $H$.
If it is not, then there exist $x \in A \smallsetminus \{1\}$ and $y \in H \smallsetminus A$ such that $y^{-1}xy \in A$. Clearly, there exists $n$ such that
\[\{ x,y^{-1}xy \}  \subset A_n=\langle \{ q_i a^i : n\geqslant i \geqslant 2\}\rangle\]
and $y \in \langle q_n\rangle *F_2$. Hence, it suffices to prove that $A_n$ is malnormal
inside of $\langle q_n \rangle * F_2$, for every $n$.

 Let $n$ be fixed, and set $q = q_n$.
Stallings folding shows that $A_n$ is the fundamental group of the graph $\Gamma_n$ with vertex set 
\[\left\{q^j \colon 0 \leqslant j \leqslant \frac {n!} 2 \right\} \sqcup \{a^{-1}\},\]
 an edge labelled $q$ connecting $q^j$ to $q^{j+1}$ for every $0\leqslant j<\frac{n!} 2$,
 an edge labelled $a$ connecting $q^{\frac {n!}{j!}}$ to $q^{\frac {n!}{(j-1)!}}$ for $2<j\leqslant n$,
 	and two edges labelled $a$ connecting $q^{\frac {n!}{2}}$ to $a^{-1}$ and  $a^{-1}$ to $q^0$ (see \cref{graph Gamma}).

To prove malnormality we need to consider a non-trivial loop $\lambda$ in $\Gamma_n$ without backtracking. The loop yields a cyclically reduced word in the alphabet $\{q,a,b\}$; consider a maximal subword of the form $q^k$, and note that we have $k>0$ without loss of generality (inverting the loop if needed). The structure of the graph $\Gamma_n$ tells us that either $k = \frac{n!}{\beta!} - \frac{n!}{\alpha!}$ for some integers $n\geqslant \alpha>\beta \geqslant 2$, or $k = \frac{n!}{\beta!}$ with $n\geqslant \beta \geqslant 2$, depending on whether the subpath of the loop $\lambda$ corresponding to $q^k$ starts at $q^{\frac{n!}{\alpha!}}$ or at $q^0$; in both cases it ends at $q^{\frac{n!}{\beta!}}$.
In either case, the fact that $\beta \geqslant 2$ implies that we have $\frac{n!}{(\beta+1)!}<k \leqslant \frac{n!}{\beta!}$. Hence $k$ determines $\beta$ uniquely. This implies that if 
the loop can be traced in $\Gamma_n$, then the subpath corresponding to $q^k$ must end at $q^{\frac{n!}{\beta!}}$, and so $\lambda$ can only be traced inside of $\Gamma_n$ in one way.
We conclude that $A_n$ is malnormal in $\langle q_n \rangle *F_2$, as claimed.

An analogous proof shows that $B$ is malnormal, and it is extremely easy to see using the above graph that the conjugates of $A$ and $B$ in $H$ intersect trivially. The upshot is that $G$ acts on the Bass--Serre tree corresponding to the HNN structure in such a way that no non-trivial element of $G$ fixes more than a single edge.

The rest of the proof follows the same outline as in the previous lemma: using its notation, if the element $h$ acts hyperbolically, then a product of non-trivial powers of $g$ and $k$ fixes an axis pointwise, and hence must be trivial. If $h$ fixes a vertex, then its fixed-point set is either a single vertex or two vertices joined by an edge. Since the action of $G$ does not invert edges, both $g$ and $k$ fix the fixed-point set of $h$ pointwise.
\end{example}

\subsection{Adian-free groups}
In our proofs that specific classes of groups are AH, we prove commutative-transitivity by proving that the groups in question are \emph{Adian-free}, which is a re-packaging of commutative-transitivity for $\mathbb{Z}^2$-free groups in terms of poison subgroups.
We now give this re-packaging before using it to obtain further examples of AH groups.

Our motivation for this re-packaging is primarily the neater proofs it yields, and in terms of statements of results it is purely cosmetic as we are always able to substitute ``Adian-free'' for the more familiar ``commutative-transitive'' property (see for example Theorem \ref{thm:wAHclassification} and Corollary \ref{corol:FirstClassification}).
The cosmetics of the change is beneficial though, as for example it lets us characterise AH groups in terms of poison subgroups (Theorem \ref{thm:SecondClassification}).

\begin{definition}
\label{def:adian}
An \emph{Adian extension} is a finitely generated torsion-free group which maps onto an infinite torsion group with cyclic kernel.
\end{definition}
We name these groups for Adian, who constructed the first examples of such groups \cite{Adian1971Certain}.
(Note that ``Adian groups'' are different \cite{Stallings1987Adian}.)
We call a group \emph{Adian-free} if it contains no Adian extension as a subgroup.

\begin{lemma}
\label{lem:CTvsAdian}
Suppose a torsion-free group $G$ has no $\mathbb{Z}^2$-subgroups. Then $G$ is commutative-transitive if and only if it is Adian-free.
\end{lemma}

\begin{proof}
Suppose $G$ is not Adian-free, so that it contains an Adian extension, i.e.\ a finitely generated subgroup $K$ with a normal, cyclic subgroup $Z$ such that $K/Z$ is an infinite torsion group.
As $K$ is torsion-free, for every $k\in K\smallsetminus\{1\}$ there exists some $i\in\mathbb{Z}$ such that $k^i\in Z\smallsetminus\{1\}$.
Therefore, every pair of non-trivial elements $k_1, k_2\in K\smallsetminus\{1\}$ has a common, non-trivial power.
This means that for all $k_1, k_2\in K\smallsetminus\{1\}$ there exists $z\in Z \smallsetminus\{1\}$ such that $[k_1, z]=1=[z, k_2]$.
As $K$ is not abelian, this implies that $G$ is not commutative-transitive.

Suppose $G$ is Adian-free and let $k_1, k_2, z\in G\smallsetminus\{1\}$ be arbitrary elements such that $[k_1, z]=1=[z, k_2]$ but $[k_1, k_2]\neq1$.
Write $Z:=\langle z\rangle$ and $K:=\langle k_1, k_2, z\rangle$.
As $G$ contains no $\mathbb{Z}^2$ subgroups,
and as $Z$ is central in $K$, the subgroups $\langle k, z\rangle$ are cyclic for all $k\in K$.
Therefore, for all $k\in K\smallsetminus\{1\}$ there exists some $i\in\mathbb{Z}\smallsetminus\{0\}$ such that $k^i\in Z$, and so $K/Z$ is a torsion group.
As $G$ is Adian-free, $K/Z$ is finite, and thus $K$ is torsion-free and virtually cyclic, and hence cyclic, and so $[k_1, k_2]=1$.
Thus $G$ is commutative-transitive.
\end{proof}

\subsection{{\boldmath$\BS$}-free groups of cohomological dimension {\boldmath$2$}}
\label{sec:coHomBS}
Our next class of examples of AH groups is the class of $\BS$-free groups of cohomological dimension $2$, which includes for example all $\BS$-free torsion-free one-relator groups.

We recall the following relatively well-known fact.
\begin{lemma}
\label{lem:bs_contains_metabelian_bs}
Every Baumslag--Solitar group contains a $\BS(1, k)$ subgroup for some $|k|\geqslant1$.
\end{lemma}
\begin{proof}
Considering the infinite cyclic cover of the graph of $\mathbb{Z}$s for \[\BS(m, n) = \langle a, t\mid t^{-1}a^mt=a^n\rangle\] with $|m|,|n|>1$ we see that $a$ and $a^t$ generate a group isomorphic to $\langle x, y \mid x^n = y^m \rangle$.
The quotient of the latter by its central $\mathbb{Z} = \langle x^n \rangle = \langle y^m \rangle$ is $\mathbb{Z} / n * \mathbb{Z} / m$ and the preimage of any infinite cyclic subgroup is thus isomorphic to $\mathbb{Z}^2$.
For example, \[ \langle xy, x^n \rangle \cong \langle a t^{-1} a t, a^n \rangle \cong \mathbb{Z}^2 \cong \BS(1, 1).\]

For $\BS(1, k)$ with $k < 0$ we have $\langle a, t^2 \rangle \cong \BS(1, k^2)$.
\end{proof}

\begin{proposition}
\label{prop:coHomBS}
Let $G$ be a group of cohomological dimension $2$. Then $G$ is $\BS$-free if and only if $G$ is AH.
\end{proposition}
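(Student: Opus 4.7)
The proof splits into two implications.

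The forward direction (AH implies $\BS$-free) is immediate from the paper's observation that every Baumslag--Solitar group contains some $\BS(1,k)$ with $|k|\geq 1$: since $\BS(1,k) = \mathbb{Z}[1/k]\rtimes\mathbb{Z}$ for $k\geq 2$ and $\BS(1,1) = \mathbb{Z}^2 = \mathbb{Z}\rtimes\mathbb{Z}$, each has the form $H\rtimes\mathbb{Z}$ with $H$ non-trivial locally cyclic, and so is directly excluded by Condition~\ref{AHdef:BS}.

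For the converse ($\BS$-free and $\operatorname{cd}\leq 2$ implies AH), the main input I would use is the classification of torsion-free soluble groups of cohomological dimension at most $2$ due to Bieri and Gildenhuys: any such group is one of $\{1,\mathbb{Z},\mathbb{Z}^2,\mathbb{Z}\rtimes_{-1}\mathbb{Z},\BS(1,n)\}$. Condition~\ref{AHdef:BS} is then immediate: any subgroup $H\rtimes\mathbb{Z}\leq G$ with $H$ non-trivial locally cyclic is torsion-free soluble of $\operatorname{cd}\leq 2$, so lies in the list, and every non-trivial entry either is a Baumslag--Solitar group or contains $\mathbb{Z}^2 = \BS(1,1)$ (in the Klein bottle $\langle a,t\mid tat^{-1}=a^{-1}\rangle$, the subgroup $\langle a,t^2\rangle$ is $\mathbb{Z}^2$), contradicting the $\BS$-freeness of $G$.

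For Condition~\ref{AHdef:CT} (commutative-transitivity), assume for contradiction that some non-trivial $a\in G$ has witnesses $x,y\in C_G(a)$ with $[x,y]\neq 1$. Since $\operatorname{cd}(\mathbb{Z}^n)=n$, every finitely generated abelian subgroup of $G$ has rank at most $2$; $\BS$-freeness excludes rank $2$, so each such subgroup is cyclic. Applied to $\langle a,x\rangle$ and $\langle a,y\rangle$, this gives $c_1,c_2\in C_G(a)$ with $\langle a,x\rangle=\langle c_1\rangle$ and $\langle a,y\rangle=\langle c_2\rangle$. The subgroup $L:=\langle c_1,c_2\rangle$ is $2$-generated, torsion-free, of $\operatorname{cd}\leq 2$, with $a$ a non-trivial central element. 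Bieri's structure theorem for cd~$2$ groups with non-trivial centre then gives $L\cong\mathbb{Z}\times F$ for some free group $F$; since $L$ is $2$-generated and the abelianisation of $\mathbb{Z}\times F_n$ is $\mathbb{Z}^{n+1}$, necessarily $\operatorname{rank}(F)\leq 1$. Thus $L$ is $\mathbb{Z}$ or $\mathbb{Z}^2$; the latter contradicts $\BS$-freeness, while the former places $x,y$ inside a cyclic subgroup and so contradicts $[x,y]\neq 1$. The main obstacle I anticipate is locating or supplying the correct form of Bieri's centre theorem for non-finitely-presented cd~$2$ subgroups; if the standard statement requires FP$_2$, a more hands-on analysis of $L$ (perhaps via its natural quotient $\mathbb{Z}/p \ast \mathbb{Z}/q$, where $a=c_1^p=c_2^q$) may be needed in its place.
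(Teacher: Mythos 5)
Your forward direction and your verification of Condition \ref{AHdef:BS} are correct and essentially the paper's argument: every Baumslag--Solitar group contains some $\BS(1,k)$, and conversely any $H\rtimes\mathbb{Z}$ with $H$ non-trivial locally cyclic inside a cd-$2$ group is torsion-free soluble of cd $\leq 2$, hence some $\BS(1,k)$ by the Bieri--Gildenhuys classification. The problem is in your proof of commutative-transitivity. The step ``Bieri's structure theorem for cd $2$ groups with non-trivial centre gives $L\cong\mathbb{Z}\times F$'' invokes a theorem that is false as stated: the trefoil knot group $\langle c_1,c_2\mid c_1^2=c_2^3\rangle$ is a two-generated, torsion-free, finitely presented group of cohomological dimension $2$ with non-trivial centre $\langle c_1^2\rangle$, and it is not of the form $\mathbb{Z}\times F$ (its central quotient $\mathbb{Z}/2\ast\mathbb{Z}/3$ has torsion). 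Worse, this counterexample has exactly the shape of your $L$ --- two generators with a common central power --- so no amount of massaging the hypotheses ``$2$-generated, torsion-free, cd $\leq 2$, non-trivial centre'' will rescue the claim. Your flagged fallback via the quotient of $\mathbb{Z}/p\ast\mathbb{Z}/q$ also does not work on its own: that group is not torsion and contains non-abelian free subgroups, so knowing $L/\langle a\rangle$ is a quotient of it gives no contradiction.

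The missing ingredient, which your setup already makes available, is that $L/\langle a\rangle$ is a \emph{torsion} group: for every $l\in L$ the subgroup $\langle a,l\rangle$ is finitely generated, abelian (as $a$ is central in $L$) and torsion-free with no $\mathbb{Z}^2$, hence cyclic, so some non-zero power of $l$ lies in $\langle a\rangle$. This is strictly stronger than ``non-trivial centre'' and is what rules out the torus knot groups (whose central quotients are not torsion, equivalently which contain $\mathbb{Z}^2$). The paper runs exactly this argument on $K=\langle g,h,z\rangle$: since $K$ has cd $\leq 2$ and $\langle z\rangle$ is infinite cyclic normal, Bieri's Corollary 8.7 shows $K/\langle z\rangle$ is free-by-finite, and a free-by-finite torsion group is finite; thus $K$ is torsion-free virtually cyclic, hence cyclic, giving $[g,h]=1$. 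If you replace your appeal to the non-existent structure theorem by this torsion observation plus the correct form of Bieri's result, your proof closes and coincides with the paper's.
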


\begin{proof}
If $G$ is AH then it contains no subgroups of the form $H\rtimes\mathbb{Z}$ for $H$ non-trivial locally cyclic.
As every Baumslag--Solitar group contains a $\BS(1, k)$ subgroup for some $|k|\geqslant1$, and as $\BS(1, k) \cong \mathbb{Z}[\frac{1}{k}] \rtimes \mathbb{Z}$ has the form $H\rtimes\mathbb{Z}$ for $H$ locally cyclic, $G$ is $\BS$-free as required.

Now suppose that $G$ is $\BS$-free.
Then $G$ contains no $H\rtimes\mathbb{Z}$ subgroup with $H$ locally cyclic, and to see this suppose otherwise.
As $G$ has cohomological dimension $2$, the subgroup $H\rtimes\mathbb{Z}$ has cohomological dimension at most $2$, and is soluble.
Hence, $H\rtimes\mathbb{Z}\cong\BS(1, k)$ for some $|k|\geqslant1$ \cite[Theorem 5]{Gildenhuys1979Classification}, a contradiction as $G$ is $\BS$-free.

For commutative-transitivity, we first prove that Adian extensions have cohomological dimension at least $3$.
So, let $K$ be a finitely generated group of cohomological dimension at most $2$ and with a normal, cyclic subgroup $Z$ such that $K/Z$ is a torsion group.
In general, the quotient of a finitely generated group of cohomological dimension at most $2$ by a finitely generated normal free subgroup is free-by-finite \cite[Corollary 8.7]{Bieri1981Homological}, and so in our special case the torsion quotient $K/Z$ must in fact be finite.
Hence, $K$ is torsion-free and virtually $\mathbb{Z}$, and so cyclic.
In particular, $K$ is not an Adian extension, and so Adian extensions have cohomological dimension at least $3$.
Therefore, as subgroups of $G$ have cohomological dimension at most $2$, we have that $G$ is Adian-free, and hence is commutative-transitive by Lemma \ref{lem:CTvsAdian}.
The result follows.
\end{proof}

We are not yet ready to prove Theorem \ref{thm:coHom2Main:intro}, as we additionally need the inclusions $\text{AH}\subseteq\text{weakly AH}\subseteq\text{$\BS$-free}$. Our proof of the first of these requires the fact that AH groups are CSA.


\section{The CSA property}
\label{sec:CSA}
Recall from the introduction that a group $G$ is \emph{CSA} if all maximal abelian subgroups of $G$ are malnormal in $G$.
In this section we prove that torsion-free AH groups are CSA.
This is a useful result as it promotes commutative-transitivity to the stronger CSA property.
We later apply it to prove that AH groups are weakly AH (Lemma \ref{lem:AHimpliesWAH}).
Moreover, it means that AH groups have a JSJ-theory.

We begin with two preliminary results on commutative-transitive groups.
The first also characterizes commutative-transitive groups.

\begin{lemma}
\label{lem:maxab_is_centraliser}
Let $G$ be commutative-transitive group. Then
\begin{enumerate}[label=(\roman*)]
\item every maximal abelian subgroup of $G$ is the centraliser of each of its non-trivial elements, and
\item the centraliser of every non-trivial element or non-trivial abelian subgroup is maximal abelian.
\end{enumerate}
\end{lemma}

\begin{proof}
To show (i), let $A \leqslant G$ be maximal abelian and let $a \in A \smallsetminus \{1\}$.
Then $A \leqslant C_G(a)$ and by assumption the latter is abelian.
Thus by maximality we have $A = C_G(a)$.

For (ii), if $a \in G$ is non-trivial then $C_G(a) = C_G(\langle a \rangle)$ so it suffices to consider a non-trivial abelian subgroup $A \leqslant G$.
By commutative transitivity, $C_G(A)$ is abelian, and by definition it contains every abelian subgroup containing $A$ so in particular it contains every abelian subgroup containing itself, so it is maximal abelian.
\end{proof}

\begin{lemma}
\label{lem:rootclosed}
Let $G$ be torsion-free commutative-transitive and let $A = C_G(a)$ be the centraliser of a non-trivial element $a \in G$.
Then $A$ is root-closed, that is, if $g \in G$ satisfies $g^k \in A$ for $k \neq 0$ then $g \in A$.
\end{lemma}

\begin{proof}
By \cref{lem:maxab_is_centraliser} we have $A = C_G(g^k)$ but $[g, g^k] = 1$ so $g \in A$.
\end{proof}

The main result of this section follows quickly from the following classification of AH groups.

\begin{proposition}
\label{prop:ah_via_centralisers}
A group $G$ is AH if and only if centralisers of non-trivial elements of $G$ are locally cyclic and self-normalizing.
\end{proposition}

\begin{proof}
Suppose first that $G$ is AH.
As noted with Condition \ref{AHdef:cyclic} in \cref{sec:lccConditions}, every non-trivial element has locally cyclic centraliser.
Let $h \in G$ be non-trivial and let $H = C_G(h)$.
Suppose that $g \in N_G(H)$, that is, $H^g = H$.
If $g \notin H$ then $\langle H, g \rangle / H \cong \mathbb{Z}$ as $H$ is root-closed (\cref{lem:rootclosed}) so $\langle H, g \rangle \cong H \rtimes \mathbb{Z}$, contradicting algebraic hyperbolicity.
Thus $N_G(H) = H$, that is, $H$ is self-normalizing.

Suppose now that centralisers of non-trivial elements of $G$ are locally cyclic and self-normalizing.
Then Condition \ref{AHdef:cyclic} is immediate.
It remains to show Condition \ref{AHdef:BS} holds, so suppose for the sake of contradiction that $G$ contains a subgroup $H \rtimes \mathbb{Z}$ with $H$ non-trivial locally cyclic and $\mathbb{Z} = \langle t \rangle$.
Now write $\overline{H} = C_G(H)$. 
Then since $\overline{H}^t = C_G(H^t) = C_G(H) = \overline{H}$ and $\overline{H}$ is self-normalizing, we have $t \in \overline{H}$.
But $\overline{H}$ is locally cyclic, which is impossible as $\langle t \rangle \cap H = 1$.
Thus $G$ is AH.
\end{proof}

\begin{theorem}[Theorem \ref{thm:CSAMain:intro}]
\label{thm:CSA}
Torsion-free AH groups are CSA.
\end{theorem}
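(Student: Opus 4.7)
The plan is to assume that some maximal abelian subgroup $M$ of a torsion-free AH group $G$ is not malnormal and then produce a forbidden $H \rtimes \mathbb{Z}$ subgroup, with $H$ non-trivial locally cyclic, contradicting \ref{AHdef:BS}. Concretely, pick $g \in G \setminus M$ and $1 \neq x \in M \cap M^g$, so that $y := gxg^{-1}$ also lies in $M$.

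The first step is to identify $M$ with the centraliser $C_G(x)$. Commutative-transitivity makes $C_G(x)$ abelian, so the maximal abelian subgroup $M$ containing the non-trivial element $x$ must equal $C_G(x)$. Applying the same reasoning to $y \in M$, which is non-trivial because $G$ is torsion-free, gives $M = C_G(y) = g C_G(x) g^{-1} = g M g^{-1}$. Thus $g$ normalises $M$.

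The second step is to check that $\langle g \rangle \cap M = 1$. If instead $g^k \in M$ for some $k \geq 1$, then $g^k \neq 1$ by torsion-freeness, and the centraliser $C_G(g^k)$ contains both $M$ (as $g^k \in M$ and $M$ is abelian) and $g$. Commutative-transitivity forces $C_G(g^k)$ to be abelian, so $g$ commutes with $x$, putting $g \in C_G(x) = M$, a contradiction. Hence $\langle M, g \rangle = M \rtimes \langle g \rangle$, in which $\langle g \rangle$ is infinite cyclic (torsion-freeness) and $M$ is non-trivial and locally cyclic, being a centraliser in an AH group as noted after Condition \ref{AHdef:cyclic} of the definition. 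This is precisely the forbidden $H \rtimes \mathbb{Z}$ subgroup.

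I do not foresee any real obstacle. The decisive observation is that $g$ normalises $M$, which falls out immediately once $M$ is realised as a centraliser; after that, ruling out the possibility that $\langle M, g \rangle / M$ is finite is essentially formal and just re-uses commutative-transitivity.
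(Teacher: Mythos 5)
Your proof is correct, and it takes a genuinely different and in fact shorter route than the paper. The paper immediately converts the failure of malnormality into a Baumslag--Solitar-type relation $y^{-1}x^m y = x^n$ and then works locally inside $B=\langle x,y\rangle$: it proves by hand that $H=C_B(x)$ is root-closed in $B$, deduces that $H$ is normal in $B$ from the power relation, and finally splits into cases according to whether $B/H$ is finite (ruled out via maximality of the ambient abelian subgroup) or infinite cyclic (yielding the forbidden $H\rtimes\mathbb{Z}$). You instead retain the full conjugation relation $gxg^{-1}=y\in M$ rather than reducing to a power relation, and exploit it globally: commutative-transitivity plus maximality identify $M$ with $C_G(x)$ and with $C_G(y)=gC_G(x)g^{-1}$, so $g$ normalises $M$ for free, and the short commutative-transitivity argument showing $\langle g\rangle\cap M=1$ replaces the paper's entire root-closure/normality computation and its finite-quotient case. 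Both proofs terminate by exhibiting a subgroup forbidden by Condition \ref{AHdef:BS}; what your approach buys is the elimination of the case analysis and of the somewhat delicate exponent bookkeeping (the identities $(yzy^{-1})^{ma}=x^{nb}$ and so on), at no cost in generality. One point worth making explicit when you write this up is the justification that $M$ is locally cyclic (torsion-free abelian with no $\mathbb{Z}^2$, since $\mathbb{Z}^2$ is itself a forbidden $H\rtimes\mathbb{Z}$), which you correctly cite from the remark following Condition \ref{AHdef:cyclic}.
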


\begin{proof}
Let $G$ be AH and let $A \leqslant G$ be a maximal abelian subgroup.
Suppose that $g \in G$ satisfies $A^g \cap A \neq 1$ and let $h \in A^g \cap A$ be non-trivial.
As conjugation by $g$ is an automorphism of $G$, $A^g$ is also a maximal abelian subgroup and so by \cref{lem:maxab_is_centraliser} we have $A^g = C_G(h) = A$.
Now by \cref{prop:ah_via_centralisers} $A$ is self-normalizing, so $g \in A$.
Thus $A$ is malnormal.
\end{proof}

\p{{\boldmath$\BS$}-free groups of cohomological dimension {\boldmath $2$}}
We can now prove Corollary \ref{corol:codimCSA:intro}.

\begin{corollary}[Corollary \ref{corol:codimCSA:intro}]
\label{corol:codimCSA}
$\BS$-free groups of cohomological dimension $2$ are CSA.
\end{corollary}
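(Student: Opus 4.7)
The plan is to observe that this corollary follows immediately by chaining together Proposition \ref{prop:coHomBS} and Theorem \ref{thm:CSA}, which have both just been established.

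First I would note that any group $G$ of cohomological dimension $2$ is automatically torsion-free, since groups of finite cohomological dimension are torsion-free. Thus the hypothesis of $G$ being $\BS$-free is being imposed on a torsion-free group, putting us in the setting of the previously proved results.

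Next I would invoke Proposition \ref{prop:coHomBS}: since $G$ has cohomological dimension $2$ and is $\BS$-free, it is algebraically hyperbolic. Finally, I would apply Theorem \ref{thm:CSA} to conclude that $G$ is CSA. The entire argument is a two-line citation, so there is no real obstacle; the substance lies in the two results being combined. It is worth emphasising that this corollary is placed immediately after Theorem \ref{thm:CSA} precisely because it only requires the direction of Proposition \ref{prop:coHomBS} saying that $\BS$-free implies AH at cohomological dimension $2$, and does not need any of the converse directions or the wider equivalences captured by the full Theorem \ref{thm:coHom2Main:intro}.
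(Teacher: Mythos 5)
Your proposal is correct and is exactly the paper's argument: the paper proves this corollary by combining Proposition \ref{prop:coHomBS} with Theorem \ref{thm:CSA}, just as you do. Your additional remark that cohomological dimension $2$ forces torsion-freeness is a sensible (and implicitly used) observation that the paper leaves unstated.
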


\begin{proof}
The result is immediate by combining Proposition \ref{prop:coHomBS} and Theorem \ref{thm:CSA}.
\end{proof}

\p{Cyclic centralisers}
Theorem \ref{thm:CSA} gives a new proof of the following result of Myasnikov and Remeslennikov \cite[Proposition 11]{myasnikov1996exponential}.

\begin{corollary}[Myasnikov--Remeslennikov]
Let $G$ be a group such that every non-trivial element has infinite cyclic centraliser. Then $G$ is CSA.
\end{corollary}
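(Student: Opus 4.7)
The plan is to deduce this directly from Theorem \ref{thm:CSA} via Lemma \ref{lem:cyclicCentralisers}, observing first that the hypothesis forces $G$ to be torsion-free. Specifically, if $g \in G$ is a non-trivial torsion element, then $g$ lies in its own centraliser $C_G(g)$, but $C_G(g)$ is assumed to be infinite cyclic and hence torsion-free, a contradiction. So $G$ is torsion-free.

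Having verified the torsion-free hypothesis, I would then invoke Lemma \ref{lem:cyclicCentralisers}, which states precisely that a group in which every non-trivial element has infinite cyclic centraliser is AH. Applying Theorem \ref{thm:CSA} to the AH group $G$ yields that $G$ is CSA, which is exactly the desired conclusion.

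The proof is therefore extremely short: the only potential subtlety is confirming the torsion-freeness observation, which is the step I would expect a reader to want spelled out, since Theorem \ref{thm:CSA} is stated for torsion-free AH groups while the corollary does not assume torsion-freeness a priori. Once that observation is made, the result is a two-line consequence of previously established material and no further obstacle arises.
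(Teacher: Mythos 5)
Your proof is correct and follows exactly the paper's route: Lemma \ref{lem:cyclicCentralisers} gives that $G$ is AH and Theorem \ref{thm:CSA} then gives CSA. The only addition is your explicit check that the hypothesis forces torsion-freeness, which the paper leaves implicit but which is a reasonable point to spell out.
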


\begin{proof}
The group $G$ is AH by Lemma \ref{lem:cyclicCentralisers}, so CSA by Theorem \ref{thm:CSA}.
\end{proof}


\section{Relationships between the three definitions}
\label{sec:WeaklyAHG}
In this section we consider the connections between AH, weakly AH, and $\BS$-free groups.
%
We start by proving the inclusions $\text{AH}\subseteq\text{weakly AH}\subseteq\text{$\BS$-free}$ (the second of these is well known).

\begin{lemma}
\label{lem:AHimpliesWAH}
Let $G$ be a torsion-free group.
\begin{enumerate}
\item\label{AHimpliesWAH:1} If $G$ is AH then $G$ is weakly AH.
\item\label{AHimpliesWAH:2} If $G$ is weakly AH then $G$ is $\BS$-free.
\end{enumerate}
\end{lemma}

\begin{proof}
For (\ref{AHimpliesWAH:1}), let $G$ be an AH group and note that $G$ has no $\mathbb{Z}^2$ subgroups.
Suppose $x\in G\smallsetminus\{1\}$ is such that $x^m$ and $x^n$ are conjugate, so $g^{-1}x^mg=x^n$.
Let $A$ be a maximal abelian subgroup of $G$ containing $x$.
Then $g^{-1}Ag \cap A$ is non-trivial, and therefore, by Theorem \ref{thm:CSA}, $g\in A$.
In particular, $g$ commutes with $x$ and so $m=n$.
Hence, $G$ is weakly AH.

For (\ref{AHimpliesWAH:2}), suppose that $G$ contains a Baumslag--Solitar subgroup.
Then $G$ contains a $\BS(1, k)$-subgroup for $|k|\geqslant1$ by  \cref{lem:bs_contains_metabelian_bs}.
It follows that $G$ is not weakly AH, as either we have a $\mathbb{Z}^2$-subgroup (if $|k|=1$) or an element conjugate to a proper power (if $|k|>1$).
The result follows from the contrapositive.
\end{proof}

We now prove Theorem \ref{thm:coHom2Main:intro} from the introduction, which is as follows.

\begin{theorem}[Theorem \ref{thm:coHom2Main:intro}]
\label{thm:coHom2Main}
Let $G$ be a group of cohomological dimension $2$. Then the following are equivalent.
\begin{enumerate}
\item $G$ is AH.
\item $G$ is weakly AH.
\item $G$ is $\BS$-free.
\end{enumerate}
\end{theorem}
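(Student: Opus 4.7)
The plan is to observe that this theorem is essentially a packaging of two results already in the paper, combined into a cycle of implications. Specifically, Theorem \ref{thm:AHimpliesWAH} establishes, for arbitrary torsion-free groups, the chain $\text{AH}\Rightarrow\text{weakly AH}\Rightarrow\BS\text{-free}$, while Proposition \ref{prop:coHomBS} provides the reverse implication $\BS\text{-free}\Rightarrow\text{AH}$ under the hypothesis of cohomological dimension $2$.

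Concretely, I would structure the proof as follows. First I would note that $G$ is torsion-free, since a group of cohomological dimension $2$ is in particular of finite cohomological dimension and hence torsion-free. Then I would verify (1)$\Rightarrow$(2) by citing Theorem \ref{thm:AHimpliesWAH}(\ref{AHimpliesWAH:1}), verify (2)$\Rightarrow$(3) by citing Theorem \ref{thm:AHimpliesWAH}(\ref{AHimpliesWAH:2}), and close the cycle (3)$\Rightarrow$(1) by citing Proposition \ref{prop:coHomBS}, which is exactly the cohomological-dimension-$2$ statement needed.

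There is no real obstacle here, since all the substantive work has already been done: the hard direction in low cohomological dimension is the step $\BS$-free $\Rightarrow$ AH, and that was handled in Proposition \ref{prop:coHomBS} using the Gildenhuys classification of soluble groups of cohomological dimension $\le 2$ and the Bieri result on torsion quotients of groups of cohomological dimension $\le 2$. The only care needed is to confirm that the general-purpose implications in Theorem \ref{thm:AHimpliesWAH} really are unconditional (they are, requiring only torsion-freeness), so that no extra hypothesis creeps in. With those pieces in place the proof reduces to a single sentence that assembles them into a cycle.
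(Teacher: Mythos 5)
Your proposal is correct and matches the paper's proof exactly: the paper likewise cites Proposition \ref{prop:coHomBS} for the implication $\BS$-free $\Rightarrow$ AH and then closes the cycle with Theorem \ref{thm:AHimpliesWAH}. Your additional remark that finite cohomological dimension forces torsion-freeness is a sensible (if implicit in the paper) sanity check and introduces no gap.
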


\begin{proof}
Let $G$ have cohomological dimension $2$.
By Proposition \ref{prop:coHomBS}, if $G$ is $\BS$-free then $G$ is AH.
The result then follows by Lemma \ref{lem:AHimpliesWAH}.
\end{proof}


\section{AH versus weakly AH groups}
\label{sec:AHvsWAH}
In this section we focus on the inclusion ``$\text{AH}\subseteq\text{weakly AH}$'' from Lemma \ref{lem:AHimpliesWAH}.
In particular, we prove that this inclusion is proper (Theorem \ref{thm:weaklyAHvsAH}), and we prove Theorem \ref{thm:LI:intro}, that the classes are equal in the case of both locally indicable groups and residually finite groups.

The results of this section are centered around a characterisation of those weakly AH groups which are not AH (Theorem \ref{thm:wAHclassification}).

\p{Adian extensions}
We begin by returning to Adian extensions, as defined in \cref{def:adian}, which give a poison subgroup characterisation of commutative-transitivity.
We now prove that these groups sit in the gap between AH and weakly AH groups, and in Theorem \ref{thm:wAHclassification}, below, we see that these groups characterise this gap.

\begin{lemma}
\label{lem:TorsionGroups}
Adian extensions are weakly AH but not AH.
\end{lemma}

\begin{proof}
Let $K$ be an Adian extension, and let $Z$ be the normal, cyclic subgroup of $K$ such that $K/Z$ is an infinite torsion group.
Then every pair of non-trivial elements $k_1, k_2\in K\smallsetminus\{1\}$ has a common, non-trivial power, so $K$ contains no $\mathbb{Z}^2$-subgroup, while if $g^{-1}x^mg=x^n$ with $x$ non-trivial then taking $i\in\mathbb{Z}\smallsetminus \{0\}$ such that $x^i$ is a power of $g$ we have $x^i=g^{-1}x^ig$, which gives
\[
x^{im}=(g^{-1}x^ig)^m=(g^{-1}x^mg)^i=(x^n)^i=x^{in}
\]
and so $im=in$, so $m=n$.
Therefore, $K$ is weakly AH.

Then $K$ is not AH because, as observed above, it contains no $\mathbb{Z}^2$ subgroups, and so by Lemma \ref{lem:CTvsAdian} it is not commutative-transitive.
\end{proof}

As Adian extensions do indeed exist, there exist weakly AH groups which are not AH.
Theorem \ref{thm:weaklyAHvsAH}, below, gives examples with specific properties.

We now give our characterisation of those weakly AH groups which are not AH.

\begin{theorem}
\label{thm:wAHclassification}
A torsion-free group $G$ is AH if and only if
\begin{enumerate}
\item\label{wAHclassification:1} $G$ is Adian-free, and
\item\label{wAHclassification:2} $G$ is weakly AH.
\end{enumerate}
\end{theorem}

\begin{proof}
Suppose $G$ is AH.
Then (\ref{wAHclassification:2}) holds, by Lemma \ref{lem:AHimpliesWAH}, while all subgroups of $G$ are also AH, and so (\ref{wAHclassification:1}) holds by Lemma \ref{lem:TorsionGroups}.

Suppose that $G$ is weakly AH and Adian-free.
Then $G$ has no $\mathbb{Z}^2$ subgroups, so $G$ is commutative-transitive by Lemma \ref{lem:CTvsAdian}.
To see that $G$ contains no $H\rtimes\mathbb{Z}$ subgroup with $H$ non-trivial locally cyclic, suppose otherwise.
Then, for such a subgroup $H\rtimes\mathbb{Z}$ with the $\mathbb{Z}$-factor generated by an element $t$, there exists $x\in H$ and $m, n\in\mathbb{Z}\smallsetminus\{0\}$ such that $t^{-1}x^mt=x^n$. Then $|m|=|n|$ and so $\langle t^2, x^m\rangle\cong\mathbb{Z}^2$, contradicting weak AH.
Therefore, $G$ is commutative-transitive and contains no such $H\rtimes\mathbb{Z}$ subgroups, and so is AH as required.
\end{proof}

As Lemma \ref{lem:CTvsAdian} allows us to swap between ``commutative-transitive'' and ``Adian-free'', we have the following.

\begin{corollary}
\label{corol:FirstClassification}
A torsion-free group $G$ is AH if and only if
\begin{enumerate}
\item\label{FirstClassification:1} $G$ is commutative-transitive, and
\item\label{FirstClassification:2} $G$ is weakly AH.
\end{enumerate}
\end{corollary}

\begin{proof}
As both AH and weakly AH groups contain no $\mathbb{Z}^2$-subgroups, Lemma \ref{lem:CTvsAdian} is applicable and so commutative-transitivity is equivalent to being Adian-free. The result now follows from Theorem \ref{thm:wAHclassification}.
\end{proof}

\p{Separating AH and weakly AH}
Lemma \ref{lem:TorsionGroups} and Theorem \ref{thm:wAHclassification} combine with some deep results to give the following:

\begin{theorem}
\label{thm:weaklyAHvsAH}
There exist weakly AH groups $G$ which are not AH.
Moreover, the groups $G$ can be taken to be finitely generated or simple.
\end{theorem}

\begin{proof}
Adian proved the existence of what we call Adian extensions, that is, there exist finitely generated, torsion-free groups that are cyclic extensions of infinite torsion groups \cite{Adian1971Certain}.
Therefore, by Lemma \ref{lem:TorsionGroups} there exist finitely generated weakly AH groups which are not AH.

Obraztsov gave a refined version of Adian's theorem, by proving that there exist infinite simple groups such that their every finitely generated subgroup is either cyclic or a two-generated Adian extension \cite[Theorem A]{Obraztsov1998Simple}.
Now, locally (weakly AH) implies weakly AH, and so Obraztsov's groups are weakly AH by Lemma \ref{lem:TorsionGroups}.
On the other hand, by simplicity they are not locally cyclic, and so contain Adian extensions, and so by Theorem \ref{thm:wAHclassification} Obraztsov's groups are not AH, as required.
\end{proof}

\p{Equating AH and weakly AH}
We now prove Theorem \ref{thm:LI:intro}.
Recall from the introduction that a group $G$ is \emph{locally indicable} if every non-trivial finitely generated subgroup $K$ of $G$ surjects onto the infinite cyclic group, $K\twoheadrightarrow \mathbb{Z}$, and is \emph{residually finite} if the intersection of all of its finite-index subgroups is trivial.

\begin{theorem}[Theorem \ref{thm:LI:intro}]
\label{thm:LI}
Let $G$ be a locally indicable group or a torsion-free residually finite group.
Then $G$ is weakly AH if and only if $G$ is AH.
\end{theorem}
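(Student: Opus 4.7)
The forward direction (AH implies weakly AH) is immediate from Theorem \ref{thm:AHimpliesWAH}, so the plan is to prove the converse: if $G$ is locally indicable and weakly AH, then $G$ is AH.

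The natural tool is Lemma \ref{lem:WAHvsAH}, which reduces AH-ness (for a weakly AH group) to the non-existence of a subgroup $K \leq G$ satisfying simultaneously: (i) $K$ is two-generated, (ii) $K$ has finite abelianisation, and (iii) $K$ has a normal cyclic subgroup $Z$ with $K/Z$ an infinite torsion group. The plan is to show that local indicability alone rules out condition (ii) for any subgroup satisfying (i) and (iii).

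Suppose such a $K$ existed. Then $K$ is finitely generated by (i), and $K$ is non-trivial since $K/Z$ is infinite by (iii). Local indicability of $G$ applied to $K$ then yields a surjection $K \twoheadrightarrow \mathbb{Z}$, so the abelianisation of $K$ is infinite, contradicting (ii). Hence no subgroup $K$ of the forbidden form exists in $G$, and Lemma \ref{lem:WAHvsAH} gives that $G$ is AH.

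There is no real obstacle here: the argument is essentially a one-line consequence of Lemma \ref{lem:WAHvsAH} once local indicability is invoked on the (necessarily finitely generated, non-trivial) candidate subgroup $K$. The only point to verify carefully is that the hypotheses of Lemma \ref{lem:WAHvsAH} (in particular that $G$ is weakly AH) are in force, which is given, and that $K$ is automatically non-trivial, which follows from $|K/Z| = \infty$.
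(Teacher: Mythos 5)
Your proposal is correct and follows essentially the same route as the paper: both deduce the converse from Lemma \ref{lem:WAHvsAH} by observing that local indicability forces every non-trivial finitely generated subgroup to have infinite abelianisation, which rules out any subgroup $K$ of the forbidden form. The only detail left implicit is that local indicability also makes $G$ torsion-free, which is needed for the hypothesis of Lemma \ref{lem:WAHvsAH}; the paper notes this explicitly.
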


\begin{proof}
Locally indicable groups are torsion-free, and so by Theorem \ref{thm:wAHclassification} it is sufficient to prove that locally indicable groups and residually finite groups are Adian-free.
This is known for residually finite groups \cite{Sozutov2000Residually}, so we need only consider locally indicable groups.

Suppose $G$ is locally indicable or, more generally, that every non-trivial finitely generated subgroup of $G$ is either infinite cyclic or has a proper non-torsion quotient.
Then $G$ is Adian-free by the following proposition.
\end{proof}

\begin{proposition}
Every proper quotient of an Adian extension is torsion.
\end{proposition}

In analogy to just infinite groups, we could call this property being \emph{just non-torsion}.

\begin{proof}
Suppose that $K$ is an Adian extension, so $K$ is finitely generated torsion-free and there is an infinite cyclic subgroup $Z = \langle z \rangle \lhd K$ with $K / Z$ infinite torsion.
If $1 \neq N \lhd K$ then we can pick some non-trivial $n \in N$ and since $K / Z$ is torsion, there exists an integer $r \geqslant 1$ such that $n^r \in Z$, say $n^r = z^s$ where $s \in \mathbb{Z} \smallsetminus \{0\}$ (by torsion-freeness).
Then the quotient $K \to K / N$ factors through $K / \langle z^s \rangle$, which is an extension of $\langle z \rangle / \langle z^s \rangle$ by $K / Z$ and thus, being torsion-by-torsion, is itself torsion.
Hence $K / N$ is torsion.
\end{proof}

\p{Groups of type \boldmath{$\mathtt F$}}
We end this section by discussing Question~\ref{qn:WAHvsAH} from the introduction.
Let us rephrase this question in terms of Adian extensions:
\begin{question}[Question~\ref{qn:WAHvsAH}]
Do Adian extensions have infinite cohomological dimension?
\end{question}
The statement above is equivalent to that of \cref{qn:WAHvsAH}, since groups with non-trivial torsion always have infinite cohomological dimension.

If the above question has a negative answer, then every counterexample divided by its centre gives a finitely generated  infinite torsion group whose cohomology is periodic of period $2$ after finitely many steps. Groups with this property have been investigated in the literature, for example by Talelli~\cite{Talelli2012}.

\smallskip

The following corollary of Theorem \ref{thm:wAHclassification} explains the importance of \cref{qn:WAHvsAH}.

\begin{corollary}
\label{corol:AnswerQn}
There exists an Adian extension that embeds into a weakly AH group of type $\mathtt F$ if and only if there exists a weakly AH group of type $\mathtt F$ which is not AH.
In particular, if Adian extensions have infinite cohomological dimension then a group $G$ of type $\mathtt F$ is weakly AH if and only if it is AH.
\end{corollary}

\begin{proof}
The first sentence is immediate from Theorem \ref{thm:wAHclassification}, while the second sentence holds as subgroups of groups $G$ of type $\mathtt{F}$ have finite cohomological dimension.
\end{proof}


\section{Poison subgroups}
\label{sec:Characterisations}
In this section we investigate certain poison subgroups, that is certain finitely generated groups which can never occur as subgroups of AH groups.
In particular, Theorem \ref{thm:SecondClassification} is a poison-subgroup characterisation of AH groups.
This is similar in spirit to the definition of $\BS$-free groups, but we must exclude two additional classes of groups as subgroups: Adian extensions, and ``{Gildenhuys groups}''.
From here, we prove the existence of $\BS$-free groups which are not weakly AH, which completes the proof of Theorem \ref{thm:inclusions:intro}.

\p{Gildenhuys' groups, and the groups {\boldmath$G_{m, n}$}}
Recall the notation $\mathbb{Z}\left[1/mn\right]$, with $m, n\in\mathbb{Z}\smallsetminus\{0\}$, for the subgroup of $(\mathbb{Q}, +)$ consisting of fractions of the form $\frac{x}{m^pn^q}$, where $x, p, q\in\mathbb{Z}$.
\begin{definition}
Let $m$ and $n$ be non-zero coprime integers.
We define the group \[G_{m, n} := \mathbb{Z}\left[1/mn\right]\rtimes\mathbb{Z} \] where the action of the generator of the $\mathbb{Z}$-factor is multiplication by $m/n$.
If $|m|, |n|>1$ then we call $G_{m, n}$ a \emph{Gildenhuys group}.
\end{definition}
We name these groups for Gildenhuys, who proved that they have cohomological dimension $3$ \cite[Theorem 4]{Gildenhuys1979Classification}.
The other cases, where we can take $m=1$ without loss of generality since $G_{n,m} \cong G_{m,n} \cong G_{-m,-n}$, are precisely the soluble Baumslag--Solitar groups, with $G_{1, n}=\BS(1, n)$.
We call a group \emph{Gildenhuys-free} if it contains no Gildenhuys group as a subgroup.

The groups $G_{m, n}$ occur naturally as subgroups of $H\rtimes\mathbb{Z}$ groups, where $H$ is locally cyclic.
To see this, first note that $H$ is a subgroup of $(\mathbb{Q}, +)$ and for every automorphism $\phi\in\aut(H)$ there exist coprime integers $m,n\in\mathbb{Z}$ such that $\phi(a)=\frac{m}{n}a$ for all $a\in H$.
We therefore use the notation $\phi_{m, n}$ for the automorphism defined by $\phi(a)=\frac{m}{n}a$.
The groups $G_{m, n}$ occur as follows.

\begin{lemma}
\label{lem:GmnOccur}
Consider a semidirect product $\mathbb Q \rtimes\mathbb{Z}$, where the $\mathbb{Z}$-factor acts on $\mathbb Q$ by the automorphism $\phi_{m, n}$.
Then for all $x, y \in \mathbb Q$ with $x \neq 0$ and $p \in \mathbb{Z}$, the subgroup $K = \langle (x, 0), (y, p)\rangle$ of $\mathbb Q \rtimes\mathbb{Z}$ is isomorphic to the group $G_{m^p, n^p}$.
\end{lemma}

\begin{proof}
In general, given a semidirect product $N \rtimes Q$ we can modify the obvious splitting of the short exact sequence \[ 1 \to N \to N \rtimes Q \to Q \to 1\] by any crossed homomorphism $\psi \colon Q \to Z(N)$; this yields an automorphism $(n, q) \mapsto (n \psi(q), q)$ of $N \rtimes Q$.
For the present case of $\mathbb{Q} \rtimes \mathbb{Z}$, picking $\psi \colon \mathbb{Z} \to \mathbb{Q}$ to be the unique crossed homomorphism with
\[\psi(1) = - y \left(\sum_{i=0}^{p-1} \frac {m^i}{n^i}\right)^{-1} \]
 will give an automorphism sending $(y, p) \mapsto (0, p)$ and fixing $(x, 0)$.
Postcomposing with the automorphism $(a, r) \mapsto (a/x, r)$ gives the generating pair $(1, 0), (0, p)$.

The action of $(0, p)$ on $\mathbb{Q}$ is $\phi_{m, n}^p = \phi_{m^p, n^p}$ and thus we have $\mathbb{Z}[1 / mn] \subseteq \langle (1, 0), (0, p) \rangle \cap \mathbb{Q} \times \{0\}$.
Since $\mathbb{Z}[1 / mn]$ is invariant under this action, we indeed have the semidirect product $\mathbb{Z}[1 / mn] \rtimes_{\phi_{m^p, n^p}} \mathbb{Z} = G_{m^p, n^p}$ as claimed.
\end{proof}

\p{Poison-subgroup characterisation}
We now turn to our poison-subgroup characterisation of AH groups.
We start with a result which parallels Corollary \ref{corol:FirstClassification}.

\begin{proposition}
\label{prop:SecondClassification}
A torsion-free group $G$ is AH if and only if
\begin{enumerate}
\item\label{AHdef:CT2} $G$ is commutative-transitive, and
\item\label{AHdef:BS2} $G$ contains no Baumslag--Solitar group or Gildenhuys group as a subgroup.
\end{enumerate}
\end{proposition}

\begin{proof}
Every Baumslag--Solitar group contains a $\BS(1, k)$ subgroup for some $|k|\geqslant 1$ by \cref{lem:bs_contains_metabelian_bs}.
Therefore, every Baumslag--Solitar group and Gildenhuys group either is, or contains, a subgroup of the form $H\rtimes\mathbb{Z}$ with $H$ non-trivial locally cyclic.
Hence, if $G$ is AH then it satisfies (\ref{AHdef:BS2}), while it satisfies (\ref{AHdef:CT2}) by definition.

Suppose now that $G$ is torsion-free and (\ref{AHdef:CT2}) holds.
If $G$ is not AH then $G$ contains a subgroup $H\rtimes\mathbb{Z}$ with $H$ non-trivial torsion-free locally cyclic.
Any torsion-free locally cyclic group embeds into $\mathbb{Q}$, and so by Lemma \ref{lem:GmnOccur}, $G$ contains a solvable Baumslag--Solitar group or Gildenhuys group as a subgroup, so (\ref{AHdef:BS2}) does not hold.
The result follows from the contrapositive.
\end{proof}

Our poison-subgroup characterisation is as follows.

\begin{theorem}
\label{thm:SecondClassification}
A torsion-free group $G$ is AH if and only if $G$ is
$\BS$-free,
Gildenhuys-free, and
Adian-free.
\end{theorem}

\begin{proof}
As both AH and $\BS$-free groups contain no $\mathbb{Z}^2$-subgroups, Lemma \ref{lem:CTvsAdian} is applicable and so commutative-transitivity is equivalent to being Adian-free. The result now follows from Proposition \ref{prop:SecondClassification}.
%
%
\end{proof}

Theorems \ref{thm:wAHclassification} and \ref{thm:SecondClassification} differ only slightly, with the ``$G$ is weakly AH'' in Theorem \ref{thm:wAHclassification} being replaced by the ``$\BS$-free/Gildenhuys-free'' condition.
Corollary \ref{corol:FirstClassification} and Proposition \ref{prop:SecondClassification} also have this relationship.
It is therefore tempting to guess that a torsion-free group is weakly AH if and only if it is both $\BS$-free and Gildenhuys-free.
We have shown already that a weakly AH group is indeed both $\BS$-free and Gildenhuys-free, however the other direction is unclear and appears to be difficult; let us record it here as an open question.

\begin{question}
Let $G \neq \mathbb{Z}$ be an infinite torsion-free quotient of a Baumslag--Solitar group $\BS(m, n)$ with $|m| \neq |n|$.
Does $G$ contain a Baumslag--Solitar or Gildenhuys subgroup?
\end{question}

\p{Omitting one poison class}
We now prove that Theorem \ref{thm:SecondClassification} is sharp, in the sense that omitting any one of Baumslag--Solitar groups, Gildenhuys groups or Adian extensions as poison subgroups can yield groups which are not AH. We show in particular (as is indeed a logical necessity for this sharpness) that Baumslag--Solitar groups, Gildenhuys groups, and Adian extensions themselves demonstrate this.

First, we prove that Gildenhuys groups and Baumslag--Solitar groups do not embed into each other.

\begin{lemma}
\label{lem:BSvsG}
\leavevmode
\begin{enumerate}
\item Baumslag--Solitar groups are Gildenhuys-free.
\item Gildenhuys groups are $\BS$-free
\end{enumerate}
\end{lemma}

\begin{proof}
As Baumslag--Solitar groups are torsion-free one-relator groups, they have cohomological dimension $2$ and so their subgroups have cohomological dimension at most $2$.
However, Gildenhuys groups have cohomological dimension $3$ \cite[Theorem 4]{Gildenhuys1979Classification}, and the first assertion follows.

For the second assertion, let $G_{m, n}=\mathbb{Z}\left[1/mn\right]\rtimes\mathbb{Z}$ be a Gildenhuys group.
Suppose for the sake of contradiction that $G_{m,n}$ contains a Baumslag--Solitar subgroup and choose a standard generating pair $(\frac{a}b, p), (\frac{c}d, q)\in G_{m, n}$ for it.
Note first that the element $g:=(\frac{a}b, p)^q (\frac{c}d, q)^{-p}$ is contained in $\mathbb{Z}\left[1/mn\right]$.
If $g=(0, 0)$ then the intersection $\langle (\frac{a}b, p)\rangle\cap\langle(\frac{c}d, q)\rangle$ is non-trivial, which is not the case for the standard generators of a Baumslag--Solitar group.
If $g\neq(0, 0)$ then by Lemma \ref{lem:GmnOccur}, $\langle (\frac{a}b, p), g\rangle$ is isomorphic to the Gildenhuys group $G_{m^p, n^p}$, so $\langle (\frac{a}b, p), (\frac{c}d, q)\rangle$ contains a Gildenhuys group, contradicting (1).
The result follows.
\end{proof}

We deal with Adian extensions by considering commutative-tran\-si\-tiv\-i\-ty.
Note that the group $\BS(1, -1)$ is not commutative-transitive (as it is a non-abelian group with non-trivial center), and so in the following we exclude $G_{-1,1} \cong G_{1,-1} \cong \BS(1, -1)$.

\begin{lemma}
\label{lem:CT}
The groups $G_{m, n}$ with $\frac{m}{n} \neq -1$ are commutative-transitive.
\end{lemma}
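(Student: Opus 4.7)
The plan is to show that the centralizer $C(h)$ of every non-trivial $h \in G_{m,n}$ is abelian; this is equivalent to commutative-transitivity. First I dispose of the degenerate case: if $m = n$ then coprimality forces $m = n = 1$, and $G_{1,1} \cong \mathbb{Z}^2$ is abelian. So I may assume $m \neq n$, and since $m, n \geq 1$ this ensures that $\alpha := m/n$ satisfies $\alpha^k = 1$ if and only if $k = 0$.

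Next I write down an explicit commutator criterion. Using the group operation $(x, p) + (y, q) = (x + \alpha^p y, p + q)$, a direct calculation shows that $[(x, p), (y, q)] = 0$ if and only if
\[
x(1 - \alpha^q) = y(1 - \alpha^p).
\]
Fix a non-trivial $h = (y, q)$ and compute $C(h)$ by cases. If $q = 0$ then $y \neq 0$, and the criterion becomes $y(1 - \alpha^p) = 0$, forcing $p = 0$. Hence $C(h) = \mathbb{Z}[1/mn] \times \{0\}$, which is abelian. If $q \neq 0$ and $y = 0$, the criterion forces $x = 0$, giving $C(h) = \{0\} \times \mathbb{Z}$, again abelian.

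In the remaining case $q \neq 0$ and $y \neq 0$, I verify that $C(h)$ is abelian as follows. For any $(x, p) \in C(h) \setminus \{0\}$ the criterion forces $p \neq 0$ (else $x(1 - \alpha^q) = 0$ gives $x = 0$), and then rearranges to an equality of rationals
\[
\frac{x}{1 - \alpha^p} = \frac{y}{1 - \alpha^q} =: \lambda.
\]
Thus every non-trivial element of $C(h)$ has the form $(\lambda(1 - \alpha^p), p)$ for some $p \in \mathbb{Z} \setminus \{0\}$. Given two such elements $(x_1, p_1)$ and $(x_2, p_2)$, their commutation criterion $x_2(1 - \alpha^{p_1}) = x_1(1 - \alpha^{p_2})$ reduces to $\lambda(1 - \alpha^{p_2})(1 - \alpha^{p_1}) = \lambda(1 - \alpha^{p_1})(1 - \alpha^{p_2})$, which is an identity. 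So $C(h)$ is abelian.

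Therefore every non-trivial element of $G_{m,n}$ has abelian centralizer, and $G_{m,n}$ is commutative-transitive. The only obstacle here is organizational: keeping the case analysis straight and the rational arithmetic honest in the last case; no deeper input is required. As a bonus observation, the parametrization in the last case shows that such a $C(h)$ embeds into $\mathbb{Z}$ via $(x, p) \mapsto p$, which is consistent with the expectation that centralizers in $G_{m,n}$ are locally cyclic.
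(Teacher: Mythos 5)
Your proof is correct, and it takes a genuinely different route from the paper's. You compute the centraliser of an arbitrary non-trivial element $h=(y,q)$ explicitly, via the commutation criterion $x(1-\alpha^q)=y(1-\alpha^p)$ with $\alpha=m/n$, together with the key fact that $\alpha^k=1$ only for $k=0$ once $m\neq n$ and $m,n\geq 1$; the parametrisation $x=\lambda(1-\alpha^p)$ in the generic case then makes the abelianness of $C(h)$ a formal identity. The paper instead verifies transitivity directly on a triple of pairwise-commuting elements: it forms the auxiliary elements $d=q(x,p)-p(y,q)$ and $e=r(y,q)-q(z,r)$, which lie in the normal factor $H=\mathbb{Z}\left[1/mn\right]$ and commute with the relevant pairs, and then splits into cases according to whether $d$ and $e$ vanish, relying on the same underlying observation that (for $m>1$ or $n>1$) non-trivial elements of $H$ commute only with elements of $H$ --- which is precisely your case $q=0$, $y\neq 0$. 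Your approach buys more: an explicit description of every centraliser, which in particular exhibits each as locally cyclic (your embedding into $\mathbb{Z}$ via the second coordinate), whereas the paper's argument never solves for the centraliser. Both arguments correctly isolate where the hypothesis $m,n\geq 1$ enters: for $\alpha=-1$ one has $\alpha^2=1$, which is exactly the point at which each breaks down for $\BS(1,-1)$, consistent with the paper's remark that this group is not commutative-transitive.
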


\begin{proof}
If $\frac{m}{n}=1$ then $G\cong\mathbb{Z}^2$ is abelian and thus commutative-transitive, so from now on suppose $\frac{m}{n} \neq \pm 1$.
Then $G_{m, n} = \mathbb{Z}[1/mn] \rtimes \mathbb{Z} = \{ (a, p) \colon a \in \mathbb{Z}[1/mn], p \in \mathbb{Z}\}$ is naturally a subgroup of the affine group $\operatorname{Aff}(\mathbb{Q}) = \mathbb{Q} \rtimes \mathbb{Q}^\times$ under $(a, p) \mapsto (a, (\frac{m}{n})^p)$, so the next lemma completes the proof.
\end{proof}

\begin{lemma}
\label{lem:aff_is_ct}
Let $K$ be a field.
Then the affine group $\operatorname{Aff}(K) \cong K \rtimes K^\times$ is commutative-transitive.
\end{lemma}

We give a conceptual rather than computational proof.

\begin{proof}
We prove that every non-trivial element has abelian centraliser.
Consider the natural action of $\operatorname{Aff}(K)$ on $K$.
This action is sharply $2$-transitive (an affine map is determined uniquely by where it sends $0$ and $1$) so every non-trivial element has at most one fixpoint.
For a general group action $G \curvearrowright X$, the centraliser of an element $g \in G$ preserves the fixpoint set $X^g$ (as $hg = gh$ implies that if $g \cdot x = x$ then $g \cdot (h \cdot x) = h \cdot (g \cdot x) = h \cdot x$).
This means that if $g \in \operatorname{Aff}(K)$ acts with a (necessarily unique) fixpoint $x$ then its centraliser $C_{\operatorname{Aff}(K)}(g)$ coincides with the stabiliser $(\operatorname{Aff}(K))_x \cong K^\times$ and is abelian.
The map $x \mapsto a x + b$ is fixpoint free only if $a = 1$, and such an element can only commute with other fixpoint free elements or the identity, and these together comprise the translational subgroup $K$.
\end{proof}

We now prove our claim regarding omitting conditions in Theorem \ref{thm:SecondClassification} to get non-AH groups.
The first two statements yield groups which are in fact not even weakly AH.

\begin{theorem}
\label{thm:BSvsGvsA}
\leavevmode
\begin{enumerate}
\item\label{BSvsGvsA:1} Gildenhuys groups are torsion-free Adian-free $\BS$-free groups, but are not weakly AH.
\item\label{BSvsGvsA:2} Baumslag--Solitar groups $\BS(1, k)$ are torsion-free Adian-free Gildenhuys-free groups, but are not weakly AH.
\item\label{BSvsGvsA:3} Adian extensions are torsion-free $\BS$-free Gildenhuys-free groups, but are not AH.
\end{enumerate}
\end{theorem}

\begin{proof}
For (\ref{BSvsGvsA:1}) and (\ref{BSvsGvsA:2}), note that Gildenhuys groups and Baumslag--Solitar groups $\BS(1, k)$ are soluble (in fact metabelian) and so their finitely generated subgroups have no infinite torsion quotients, thus they are Adian-free.
Moreover, both are of the form $H \rtimes \mathbb{Z}$ for locally cyclic $H$ and are thus torsion-free and not AH, which in combination with being Adian-free implies they are not weakly AH by Theorem~\ref{thm:wAHclassification}.
Finally, recall that \cref{lem:BSvsG} states that Baumslag--Solitar groups are Gildenhuys-free and Gildenhuys groups are $\BS$-free.

For (\ref{BSvsGvsA:3}),
Adian extensions are torsion-free by definition.
Now, finitely generated non-cyclic subgroups of Adian extensions are themselves Adian extensions (noting that their image in the torsion quotient must be infinite, since torsion-free virtually cyclic groups are cyclic), and recalling that Gildenhuys groups and the Baum\-slag--Solitar groups $\BS(1, k)$ are Adian-free this means that Adian extensions are $\BS$-free and Gildenhuys-free.
Finally, Adian-extensions are not AH thanks to  Lemma~\ref{lem:TorsionGroups}.
\end{proof}

\p{Separating weakly AH and {\boldmath $\BS$-free}}
We can now prove the properness of the inclusion
\[\text{weakly AH}\subseteq \text{$\BS$-free}.\]
The counter-examples are Gildenhuys groups, which have finite cohomological, and hence geometric, dimension.
However, they are not of type $\mathtt F$ as they are not finitely presentable (or even ``{almost}'' finitely presentable) \cite[Theorem C]{Bieri1978almost}.

\begin{theorem}
\label{thm:coHom3Main}
There exist finitely generated $\BS$-free groups of cohomological dimension $3$ which are not weakly AH.
\end{theorem}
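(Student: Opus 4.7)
The plan is to exhibit the Gildenhuys groups $G_{m,n}$ (for $m, n > 1$) as explicit witnesses. All the necessary pieces are already in place in the preceding sections, so the proof essentially amounts to assembling them.

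First, I would fix any Gildenhuys group $G_{m,n} = \mathbb{Z}[1/mn] \rtimes \mathbb{Z}$ with $m, n > 1$ coprime, and check that it has the three required properties. The cohomological dimension is $3$ by the Gildenhuys result already cited in the excerpt \cite[Theorem 4]{Gildenhuys1979Classification}. The $\BS$-free property and the non-weak-AH property are exactly the content of Theorem \ref{thm:BSvsG}\eqref{BSvsG:1}, which was proved immediately above using Lemmas \ref{lem:BSvsG} and \ref{lem:CT} together with Theorems \ref{thm:SecondClassification} and \ref{thm:FirstClassification}.

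So the proof is essentially a one-line invocation: by Theorem \ref{thm:BSvsG}\eqref{BSvsG:1}, each Gildenhuys group $G_{m,n}$ with $m, n > 1$ is a torsion-free, commutative-transitive, $\BS$-free group that is not weakly AH, and by \cite[Theorem 4]{Gildenhuys1979Classification} it has cohomological dimension $3$. There is no real obstacle here, since the substantive work (both the cohomological dimension computation and the verification that Gildenhuys groups fail weak AH while remaining $\BS$-free) has already been carried out earlier in the section. I would simply write the proof as: ``Let $m, n > 1$ be coprime integers. The Gildenhuys group $G_{m,n}$ has cohomological dimension $3$ by \cite[Theorem 4]{Gildenhuys1979Classification}, and is $\BS$-free but not weakly AH by Theorem \ref{thm:BSvsG}\eqref{BSvsG:1}.''
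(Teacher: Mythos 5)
Your proposal is correct and is essentially identical to the paper's own proof: both cite \cite[Theorem 4]{Gildenhuys1979Classification} for the cohomological dimension of the Gildenhuys groups and Theorem \ref{thm:BSvsG} for the $\BS$-freeness and failure of weak AH.
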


\begin{proof}
The groups in the theorem are the Gildenhuys groups, which have cohomological dimension $3$ \cite[Theorem 4]{Gildenhuys1979Classification} and, by Theorem \ref{thm:BSvsGvsA}.(\ref{BSvsGvsA:1}), are not weakly AH.
\end{proof}

\p{Separating the definitions}
We have now proved Theorem \ref{thm:inclusions:intro}.

\begin{theorem}[Theorem \ref{thm:inclusions:intro}]
\label{thm:inclusions}
We have the following chain of proper inclusions of classes of groups:
\[\text{AH}\subset\text{weakly AH}\subset\text{$\BS$-free}.\]
\end{theorem}

\begin{proof}
This is immediate from Theorems \ref{thm:weaklyAHvsAH} and \ref{thm:coHom3Main}.
\end{proof}

\p{Consequences for the complex group algebra}

As an example of a further algebraic consequence that controlling poison subgroups has, we now recall a theorem of Formanek regarding the Kaplansky idempotent conjecture and deduce a corollary for $\BS$-free groups.
(For brevity we consider only the complex group algebra but this implies the result for $K[G]$ where $K$ is any characteristic zero field.)

\begin{theorem}[{Formanek, \cite[Theorem 9]{Formanek73noetherian}}]
    \label{thm:formanek}
    Let $G$ be a torsion-free group.
    Suppose that for all $g \in G \smallsetminus \{1\}$ there are infinitely many primes $p$ such that for all $n \geqslant 1$, $g$ is not conjugate to $g^{p^n}$.
    Then the complex group algebra $\mathbb{C}[G]$ satisfies the idempotent conjecture, that is, it has no idempotents other than $0$ and $1$.
\end{theorem}

\begin{corollary}
    \label{corol:kaplansky_for_bs_free}
    Let $G$ be a torsion-free group.
    If $G$ is $\BS$-free, in particular if $G$ is AH, then $\mathbb{C}[G]$ satisfies the idempotent conjecture.
\end{corollary}

\begin{lemma}
    \label{lem:metabelian_bs_tf_quotient}
    Every non-trivial proper torsion-free quotient of $\BS(1, k)$ is isomorphic to $\mathbb{Z}$.
\end{lemma}

\begin{proof}
    If $k = 1$ this is an easy fact about torsion-free abelian groups and if $k = -1$ then this follows because $\BS(1,-1)$ has an index $2$ subgroup isomorphic to $\mathbb{Z}^2$ and the only torsion-free virtually cyclic group is $\mathbb{Z}$.

    Now suppose $k \neq \pm 1$.
    Write $A = \mathbb{Z}[\frac{1}{k}]$ so that $\BS(1, k) = A \rtimes \mathbb{Z}$ generated by $a = (1, 0)$ and $t = (0, 1)$.
    Let $N \vartriangleleft \BS(1, k)$ be a non-trivial normal subgroup.
    Then $N$ intersects $A$ non-trivially: if $g = (b, p) \in N \smallsetminus A$ then $[a, g] = (k^p - 1, 0) \in A \cap N$ and is non-trivial by assumption that $k \neq \pm 1$.
    But $A$ is locally cyclic so its proper quotients are locally finite cyclic, hence its image in any torsion-free quotient is trivial.
    Thus any torsion-free quotient of $\BS(1, k)$ factors through $\mathbb{Z}$.
\end{proof}

\begin{proof}[Proof of \Cref{corol:kaplansky_for_bs_free}]
    We show that $G$ satisfies the assumptions of \Cref{thm:formanek} in a strong way: for any $g \in G \smallsetminus \{1\}$, $g$ is not conjugate to $g^k$ for any $k \geq 2$ (in particular, for any $k = p^n$).
    Let $g, h \in G$ and $k \geq 2$.
    If $g^h = g^k$ then $\langle g, h \rangle$ is a quotient of $\BS(1, k)$ under $a \mapsto g, t \mapsto h$.
    If this is not an embedding of $\BS(1, k)$ then by torsion-freeness of $G$ and \Cref{lem:metabelian_bs_tf_quotient} we have that $\langle g, h \rangle$ is either trivial or $\mathbb{Z}$ and in either case $g = 1$.
\end{proof}


\section{Residually and fully residually (weakly) AH groups}
\label{sec:Residual}
Recall that the classes of AH and weakly AH groups are each closed under taking subgroups.
We now generalise this observation via residual properties.

\p{Fully residually AH groups}
A group $G$ is \emph{fully residually AH} if for every finite subset $S\subset G\smallsetminus\{1\}$ there exists an AH group $H_S$ and an epimorphism $\phi_S\colon G\twoheadrightarrow H_S$ such that for all $g\in S$, $\phi_S(g)$ is non-trivial.

In the following, the condition on $\mathbb{Z}^2$ subgroups is necessary, as $\mathbb{Z}^2$ itself is fully residually AH as it is fully residually $\mathbb{Z}$.

\begin{proposition}
\label{prop:residualAH}
Fully residually AH groups with no $\mathbb{Z}^2$ subgroups are themselves AH.
\end{proposition}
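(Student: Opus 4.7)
The plan is to verify the two defining conditions of algebraic hyperbolicity for $G$: commutative-transitivity (Condition \ref{AHdef:CT}) and the absence of subgroups of the form $H \rtimes \mathbb{Z}$ with $H$ non-trivial locally cyclic (Condition \ref{AHdef:BS}). In each case I would extract witnesses to the failure of the condition, package them into a finite set $S \subset G \setminus \{1\}$, and pass to the AH quotient $\phi_S \colon G \twoheadrightarrow H_S$ provided by the fully residual structure, deriving a contradiction from the AH (or merely weakly AH, via Theorem \ref{thm:AHimpliesWAH}) behaviour of $H_S$.

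As a preliminary step, $G$ must be torsion-free, since any non-trivial torsion element $g$ would, taking $S = \{g\}$, descend to a non-trivial torsion element of $H_S$, contradicting that AH groups are torsion-free by definition.

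For commutative-transitivity, suppose $g, h, k \in G \setminus \{1\}$ satisfy $[g,h] = 1 = [h,k]$ but $[g,k] \neq 1$. Set $S = \{g, h, k, [g,k]\}$ and choose an AH quotient $\phi_S$ injective on $S$. In $H_S$ we then have $[\phi_S(g), \phi_S(h)] = 1 = [\phi_S(h), \phi_S(k)]$ with all three images non-trivial, so commutative-transitivity of $H_S$ forces $\phi_S([g,k]) = 1$, contradicting $[g,k] \in S$.

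For the second condition, suppose towards a contradiction that $H \rtimes \mathbb{Z} \leq G$ with $H$ non-trivial locally cyclic. Since $G$ is torsion-free, so is $H$, hence $H$ embeds in $\mathbb{Q}$, and the conjugation action of a generator $t$ of the $\mathbb{Z}$-factor is given by multiplication by some $q = m/n \in \mathbb{Q}^*$ in lowest terms. Fixing a non-trivial $h \in H$ yields $t h^n t^{-1} = h^m$ in $G$. I would then split into two cases. If $|q| = 1$, i.e.\ $(m,n) = \pm(1,1)$, then $t^2$ centralises $h$, and as $h$ and $t^2$ are independent elements of infinite order in the semidirect product, $\langle h, t^2 \rangle \cong \mathbb{Z}^2$, contradicting the hypothesis. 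If $|q| \neq 1$ then $|m| \neq |n|$ with both non-zero; applying fully residual AH with $S = \{h, t\}$ gives an AH quotient $\phi_S$ in which $\phi_S(h), \phi_S(t) \neq 1$, and since $H_S$ is torsion-free the powers $\phi_S(h)^n$ and $\phi_S(h)^m$ are non-trivial and conjugate by $\phi_S(t)$, violating weak AH of $H_S$ (Theorem \ref{thm:AHimpliesWAH}).

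The main pitfall to watch for is the $|q|=1$ case: this is precisely where the no-$\mathbb{Z}^2$ hypothesis is essential, exactly as in the counterexample $\mathbb{Z}^2$ itself, which is fully residually $\mathbb{Z}$ (hence fully residually AH) but not AH. Away from this case all obstructions transport faithfully through the residual structure, so no further delicate arguments are required.
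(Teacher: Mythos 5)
Your proof is correct, and the first two steps (torsion-freeness via $S=\{g\}$, commutative-transitivity via $S=\{g,h,k,[g,k]\}$) coincide with the paper's argument verbatim. The difference is in how you exclude subgroups $H\rtimes\mathbb{Z}$ with $H$ non-trivial locally cyclic. The paper picks a non-trivial $h\in H$ with $t^{-1}h^mt=h^n$, notes that the no-$\mathbb{Z}^2$ hypothesis forces $[h,t]\neq 1$, takes $S=\{h,t,[h,t]\}$, and then works inside the AH quotient $H_S$ using Theorem~\ref{thm:CSA} directly: the maximal abelian subgroup containing $\phi_S(h)$ is malnormal, so the conjugation relation forces $\phi_S(t)$ into it, giving $\phi_S([h,t])=1$, a contradiction. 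You instead embed the torsion-free locally cyclic $H$ into $\mathbb{Q}$, read off the multiplier $q=m/n$ of the $t$-action (the same standard fact about automorphisms of rank-one groups that the paper invokes in Lemma~\ref{lem:GmnOccur}), and split cases: for $|q|=1$ the no-$\mathbb{Z}^2$ hypothesis is used inside $G$ itself, with no quotient needed, while for $|q|\neq 1$ you only need the weakly AH property of $H_S$ (Theorem~\ref{thm:AHimpliesWAH}) applied to $S=\{h,t\}$ rather than malnormality. Since Theorem~\ref{thm:AHimpliesWAH} is itself proved via Theorem~\ref{thm:CSA}, both routes ultimately rest on the CSA theorem, but yours isolates exactly where the no-$\mathbb{Z}^2$ hypothesis enters (the $|q|=1$ case, mirroring the $\mathbb{Z}^2$ counterexample), whereas the paper's version is slightly more compact and transports the whole obstruction into the quotient in one step.
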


\begin{proof}
Let $G$ be a fully residually AH group.
Then $G$ is torsion-free as any torsion element of $G$ is contained in the kernel of every map from $G$ to any torsion-free, hence any AH, group.

Suppose $G$ is not commutative-transitive, so there exist $g, h, k\in G\smallsetminus\{1\}$ with $[g, h]=1$ and $[h, k]=1$ but $[g, k]\neq1$.
Take $S=\{g, h, k, [g, k]\}$.
Then under the associated map $\phi_S\colon G\rightarrow H_S$ we have $\phi_S([g, k])\neq1$, but also $\phi_S([g, k])=[\phi_S(g), \phi_S(k)]=1$ as $H_S$ is commutative-transitive, a contradiction.
Hence, $G$ is commutative-transitive.

Suppose that $G$ contains a subgroup $K=H\rtimes\mathbb{Z}$ with $H$ locally cyclic and non-trivial.
Let $t$ be the generator of the $\mathbb{Z}$-factor of $K$, and let $h$ be a non-trivial element of $H$ such that there exists $m, n\in\mathbb{Z}\smallsetminus\{0\}$ satisfying $t^{-1}h^mt=h^n$ (such an element exists as $H$ is locally cyclic, and as $t^{-1}Ht=H$).
As $G$ contains no $\mathbb{Z}^2$ subgroups, $[h, t]\neq1$.
Take $S=\{h, t, [h, t]\}$, and let $\phi_S\colon G\rightarrow H_S$ be the associated map.
Let $A$ be a maximal abelian subgroup of $H_S$ containing $\phi_S(h)$.
By Theorem \ref{thm:CSA}, $A$ is malnormal in $H_S$.
As $t^{-1}h^mt=h^n$ with $\phi_S(h)$ and $\phi_S(t)$ both non-trivial, this means that $\phi_S(t)\in A$.
Hence, $[\phi_S(h), \phi_S(t)]=\phi_S([h, t])=1$ a contradiction.
Hence, $G$ contains no subgroups $H\rtimes\mathbb{Z}$ with $H$ locally cyclic.
The result follows.
\end{proof}

\begin{example}
\label{ex:residualAH}
Let $\Gamma$ be a torsion-free hyperbolic group.
If $G$ has no $\mathbb{Z}^2$ subgroups and is a limit group over $\Gamma$, then $G$ is AH.
This follows from combining Proposition \ref{prop:residualAH} with the classification of limit groups over a hyperbolic group $\Gamma$ as the finitely generated fully residually $\Gamma$ groups \cite[Proposition 1.18]{Sela2009Diophantine}.
\end{example}

\p{Residually weakly AH groups}
Our result is stronger in the setting of weakly AH groups, as it relates to a residual, rather than fully residual, property:
A group $G$ is \emph{residually weakly AH} if for every non-trivial element $g\in G\smallsetminus\{1\}$ there exists a weakly AH group $H_g$ and a epimorphism  $\phi_g\colon G\twoheadrightarrow H_g$ such that $\phi_g(g)$ is non-trivial.

As in Proposition \ref{prop:residualAH}, the condition on $\mathbb{Z}^2$ subgroups is necessary as $\mathbb{Z}^2$ itself is residually weakly AH.

\begin{proposition}
\label{prop:residualWAH}
Residually weakly AH groups with no $\mathbb{Z}^2$ subgroups are themselves weakly AH.
\end{proposition}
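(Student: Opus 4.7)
The plan is to verify the two defining conditions of being weakly AH for $G$. The first, having no $\mathbb{Z}^2$ subgroups, is part of the hypothesis, so only the power-conjugation condition needs argument: whenever $x\in G\smallsetminus\{1\}$ and $g^{-1}x^mg=x^n$ in $G$, we want $|m|=|n|$. Since this witness is carried by a single element $x$ (together with its conjugator), the whole statement should be pushable to a single weakly AH quotient; that is precisely why a residual, rather than fully residual, hypothesis suffices, contrasting with Proposition \ref{prop:residualAH}, where the three-element witness for commutative-transitivity requires a fully residual assumption.

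First, the plan is to observe that $G$ is torsion-free: since weakly AH groups are themselves torsion-free, any torsion element of $G$ lies in the kernel of every epimorphism from $G$ to a weakly AH group, so by residuality every torsion element is trivial. In particular, for any non-trivial $x\in G$, we have $x^m=1$ if and only if $m=0$.

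Given $x\in G\smallsetminus\{1\}$ and $g\in G$ with $g^{-1}x^mg=x^n$, the case $m=0$ forces $x^n=1$, hence $n=0$ and $|m|=|n|$. Otherwise $x^m\neq 1$, and the plan is to apply the residual weakly AH hypothesis to the element $x^m$, obtaining an epimorphism $\phi\colon G\twoheadrightarrow H$ with $H$ weakly AH and $\phi(x^m)\neq 1$. Then $\phi(x)\neq 1$, and the image of the given relation is $\phi(g)^{-1}\phi(x)^m\phi(g)=\phi(x)^n$, so the weakly AH property of $H$ delivers $|m|=|n|$. The only (mild) subtlety, and the main place one could go wrong, is the choice of witness element: using $\phi_x$ could give $\phi(x^m)=1$, collapsing the conjugation relation into a tautology; picking $\phi_{x^m}$ (when $x^m\neq 1$) simultaneously preserves non-triviality of $\phi(x)$ and the conjugation relation itself. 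The $\mathbb{Z}^2$-free hypothesis remains necessary for the same reason noted in the excerpt: $\mathbb{Z}^2$ is residually $\mathbb{Z}$, and hence residually weakly AH, but not itself weakly AH.
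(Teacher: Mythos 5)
Your proof is correct and follows essentially the same route as the paper's: establish torsion-freeness of $G$ from residuality, then push a single power-conjugation witness into one weakly AH quotient. The paper simply uses $\phi_x$ rather than $\phi_{x^m}$; the subtlety you flag is moot, since $\phi_x(x)\neq 1$ in a torsion-free quotient already forces $\phi_x(x)^m\neq 1$ for $m\neq 0$, and in any case the weakly AH condition applied to the non-trivial element $\phi_x(x)$ yields $|m|=|n|$ directly.
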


\begin{proof}
Let $G$ be a residually weakly AH group.
As in the proof of Proposition \ref{prop:residualAH}, $G$ is torsion-free.

Suppose that we have $x\in G\smallsetminus\{1\}$ such that $x^m$ and $x^n$ are conjugate in $G$, $m, n\in\mathbb{Z}$.
Then there exists a map $\phi_x\colon G\rightarrow H_x$ where $H_x$ is weakly AH and $\phi_x(x)$ is non-trivial.
Conjugation is preserved in the image, so $\phi_x(x)^m$ and $\phi_x(x)^n$ are conjugate in $H_x$, and so as $H_x$ is torsion-free we have $|m|=|n|$.
The result follows.
\end{proof}

We therefore have the following.

\begin{corollary}
\label{corol:residualWAH}
Suppose that $G$ is a finitely generated group with no $\mathbb{Z}^2$ subgroups, and is locally indicable, or residually finite, or has cohomological dimension $2$.
If $G$ is residually weakly AH, then $G$ is AH.
\end{corollary}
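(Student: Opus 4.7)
The plan is essentially a two-step combination of results already established in the paper, with no new ideas required. First I would invoke Proposition \ref{prop:residualWAH}: since $G$ is residually weakly AH and contains no $\mathbb{Z}^2$ subgroups, that proposition immediately tells us that $G$ itself is weakly AH. This disposes of the ``residual'' aspect of the hypothesis.

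Second, I would upgrade weakly AH to AH using the relevant bridging theorem from Section \ref{sec:WeaklyAHG}, splitting on the two alternatives in the hypothesis. If $G$ is locally indicable, then Theorem \ref{thm:LI} directly gives AH from weakly AH. If instead $G$ has cohomological dimension $2$, then the equivalence of the three conditions in Theorem \ref{thm:coHom2Main} (specifically that weakly AH implies AH in cohomological dimension $2$) finishes the proof.

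There is no real obstacle here; the corollary is packaged precisely so that Proposition \ref{prop:residualWAH} feeds into either Theorem \ref{thm:LI} or Theorem \ref{thm:coHom2Main}. The only thing to be careful about is that both of those upgrading results take a torsion-free weakly AH group as input, and torsion-freeness is automatic because $G$ being residually weakly AH makes it residually torsion-free (as established in the proof of Proposition \ref{prop:residualWAH}). Hence the two-line proof will simply cite Proposition \ref{prop:residualWAH} to reach weak AH, and then cite the appropriate one of Theorem \ref{thm:LI} or Theorem \ref{thm:coHom2Main} according to which additional hypothesis is assumed.
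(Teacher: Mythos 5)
Your proposal is correct and matches the paper's proof: first apply Proposition \ref{prop:residualWAH} to conclude $G$ is weakly AH, then upgrade via Theorem \ref{thm:LI} in the locally indicable case and via the cohomological dimension $2$ equivalence in the other case (the paper cites Proposition \ref{prop:coHomBS}, but your citation of Theorem \ref{thm:coHom2Main} is the same result packaged with Theorem \ref{thm:AHimpliesWAH}). Your remark about torsion-freeness being automatic is also accurate.
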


\begin{proof}
By Proposition \ref{prop:residualWAH}, as $G$ is residually weakly AH it is itself weakly AH.
The result then follows by Theorem \ref{thm:LI} (when $G$ is locally indicable or residually finite) and Proposition \ref{prop:coHomBS} (when $G$ is of cohomological dimension $2$).
\end{proof}

In Example \ref{ex:residualAH} we basically considered finitely generated fully residually (torsion-free hyperbolic) groups.
We now wish to promote certain residually (torsion-free hyperbolic) groups to this class.
In particular, Remeslennikov proved that a residually free group with no $\mathbb{Z}^2$ subgroups is fully residually free \cite{Remeslennikov1989exists},
and we now extend this result to residually (torsion-free hyperbolic) groups.
To do this we assume residual finiteness; this assumption conjecturally holds, in the sense that it follows from the famous conjecture that all hyperbolic groups are residually finite.

\begin{corollary}
If $G$ is a finitely generated residually finite and residually (torsion-free hyperbolic) group with no $\mathbb{Z}^2$ subgroups, then $G$ is a fully residually (torsion-free hyperbolic) group.

In particular, if every hyperbolic group is residually finite then residually (torsion-free hyperbolic) groups with no $\mathbb{Z}^2$ subgroups are fully residually (torsion-free hyperbolic).
\end{corollary}

\begin{proof}
Let $G$ be as in the statement.
As $G$ is residually finite, by Corollary \ref{corol:residualWAH} it is AH, and so commutative-transitive.
The result then follows from a result Ciobanu, Fine and Rosenberger \cite[Corollary 3.2]{Ciobanu2016Classes}.

If every hyperbolic group is residually finite, then any residually hyperbolic group is residually finite (by composing the relevant mappings).
Therefore, the second assertion follows from the first.
\end{proof}

\section{Two-generator algebraically hyperbolic groups}
\label{sec:splittings}
An \emph{abelian splitting} of a group $G$ is a decomposition of $G$ as the fundamental group of a graph of groups with abelian edge groups; we define a \emph{locally cyclic splitting} analogously. Such splittings are \emph{essential} if no edge group has finite index in an adjacent vertex group.
In this section we prove Theorem \ref{thm:TwoGenJSJ}, which classifies the essential abelian splittings of two-generated torsion-free algebraically hyperbolic groups.

The theory of abelian splittings of torsion-free hyperbolic groups is a central topic in Geometric Group Theory, and for torsion-free hyperbolic groups these splittings are encoded in ``cyclic JSJ decompositions''.
In particular, these JSJ decompositions underlie the solution to the isomorphism problem for torsion-free hyperbolic groups \cite{Sela1995isomorphism} \cite{Dahmani2008isomorphism}, while a slight generalisation and adjustment plays the same role in the isomorphism problem for all hyperbolic groups \cite{Dahmani2011isomorphism}.

As torsion-free algebraically hyperbolic groups are CSA (Theorem \ref{thm:CSA}), finitely generated such groups have unique ``abelian JSJ-de\-com\-po\-si\-tions'' \cite{Guirardel2017JSJ}.
Therefore, Theorem \ref{thm:TwoGenJSJ} characterises such JSJ-decompositions of these groups.

\p{Free products with amalgamation and HNN-extensions}
If $A, B$ are groups with subgroups $C_A\leqslant A$ and $C_B\leqslant B$ which are isomorphic via the map $\phi\colon C_A\rightarrow C_B$, then we can form the \emph{free product of $A$ and $B$ amalgamating $C_A$ and $C_B$ across $\phi$}, which is the group with relative presentation
\[
A\ast_{C_A=C_B}B:=\langle A, B\mid c=\phi(c), c\in C_A\rangle.
\]
If $H$ is a group with subgroups $A$ and  $B$ that are isomorphic via the map $\phi\colon A\rightarrow B$, then we can form the \emph{HNN-extension of $H$ across $\phi$}, which is the group with relative presentation
\[
H\ast_{A^t=B}:=\langle H, t\mid t^{-1}at=\phi(a), a\in A\rangle.
\]
These are two of the fundamental constructions in Geometric Group Theory, and every graph of groups can be
thought of as encoding a collection of such operations, one for each edge, in the following sense: collapse all but one edge; the resulting graph of groups is either an amalgamated free product or an HNN extension.

We first consider free products with amalgamation, and then HNN-extensions.
As every graph of groups can be broken down into a collection of these constructions, these are the two key situations to consider. 

One of our main motivations for this paper was to extend the proof of Kapovich and Weidmann for ``RG groups'' as far as possible \cite{Kapovich1999structure}.
In particular, the result has been extended from cyclic to locally cyclic associated subgroups.
Therefore, our proof of Theorem \ref{thm:TwoGenJSJ} follows closely their proof, both in style and substance, but the more general setting means certain steps are approached slightly differently.
For example, the proofs of \ref{lem:KWcopyFPA}.(\ref{KWcopyFPA2}) and \ref{lem:KWcopyHNN}.(\ref{KWcopyHNN3}) are at the core the same inductive argument on ``block length'' as in Kapovich--Weidmann's 3.7.(2) and 3.8.(3), however they have been adapted to deal with locally cyclic associated subgroups and we have extracted the key Bass--Serre theoretic results into \Cref{lem:generating_amalgam_factor,lem:generating_hnn_factor} respectively for the sake of clarity and in the hope they are more useful to others in this standalone form.

\subsection{Free products with amalgamation}
We first consider the case of $G$ splitting as a free product with amalgamation over a locally cyclic subgroup.
%
%

\Cref{lem:KWcopyFPA} generalises Proposition 3.7 of \cite{Kapovich1999structure}; it incorporates more groups and allows splittings across locally cyclic subgroups.


\begin{proposition}
\label{lem:KWcopyFPA}
Let $G = A\ast_CB$ be a torsion-free AH group
which can be generated by two elements, and let $C$ be non-trivial and locally cyclic.
Assume that the splitting is non-trivial, that is $A \neq C$ and $B \neq C$. Then either $A$ or $B$ is locally cyclic. In the case that $A$ is locally cyclic (the case that $B$ is locally cyclic is analogous) we further get the following:
\begin{enumerate}
\item\label{KWcopyFPA1}
$C$ is malnormal in $B$ and has finite index in $A$, $|A:C|<\infty$.
\item\label{KWcopyFPA2}
There exist elements $a\in A$, $b \in B$ such that $G = \langle a,b\rangle$ and $B = \langle a^n, b\rangle$, where $n=|A:C|<\infty$.
\item\label{KWcopyFPA3}
The group $G$ is one-ended
if and only if $B$ is one-ended.
\end{enumerate}
\end{proposition}

As mentioned, we encapsulate part of the proof in the following lemma.

\begin{lemma}
    \label{lem:generating_amalgam_factor}
    Let $G = A \ast_C B$ be a free product with amalgamation.
    If subgroups $A_0 \leqslant A$ and $B_0 \leqslant B$ are such that $G = \langle A_0, B_0 \rangle$, and $C$ is normalised by $A_0$, then $B = \langle A_0 \cap C, B_0 \rangle$.
\end{lemma}

\begin{proof}[Proof of \cref{lem:generating_amalgam_factor}]
    Let $B_1 = \langle A_0 \cap C, B_0 \rangle \leqslant B$.
    We will prove that in fact $B_1 = B$.
    Let $b \in B$ be arbitrary.
    The key idea is to repeatedly use the normal form theorem for free products with amalgamation \cite[IV.2.6]{LS} to ensure words representing $b$ contain elements of $C$ and then use the fact that $C$ is normalized by $A_0$ to push them around and simplify the word.
    Since $A_0$ and $B_1 \geqslant B_0$ generate $G$, we can write $b$ as an alternating product of elements $x_i \in A_0$ and $y_i \in B_1$.
    Consider such a product of minimal possible length, which we write as either \[
        b = x_0 y_0 x_1 y_1 \dots \quad \text{or} \quad b = y_0 x_1 y_1 x_2 \dots
    \] according to whether the first factor of the product is some $x_0 \in A_0$ or some $y_0 \in B_1$.

    If the product is empty then $b = 1 \in B_1$.
    If the product is length $1$ then either $b = y_0 \in B_1$ and we are done or $b = x_0 \in A_0$ in which case, as $A \cap B = C$, we have $b \in A_0 \cap C \leqslant B_1$.
    So we now suppose the shortest product has length at least $2$ and work towards a contradiction.
    An immediate consequence of minimality is that $x_i \notin C$ for all $i$ as otherwise we could absorb it into an adjacent $y_{i-1}$ or $y_i \in B_1$ and obtain a shorter expression.
    We now claim that some $y_i \in C$.
    Recall that the normal form theorem of free products with amalgamation says in particular that a non-empty alternating product of elements from $A \smallsetminus C$ and $B \smallsetminus C$ cannot represent the trivial element.
    If all $y_i \notin C$ then there are two cases to consider.
    If the word representing $b$ starts with $x_0 \in A_0 \smallsetminus C$ then $b^{-1} x_0 y_0 \dots$ is an alternating product so the normal form theorem forces $b \in C$, whence $(b^{-1} x_0) \in A \smallsetminus C$ also and we get a contradiction with the alternating product $(b^{-1} x_0) y_0 \dots$.
    If the word representing $b$ starts with $y_0 \in B_1$ then $b^{-1} y_0 \in B$ and we run the same argument.
    Thus some $y_i \in C$.
    If the product has length $2$ then one factor is in $A \smallsetminus C$ and the other is in $C$, yielding $b \in A \smallsetminus C$, a contradiction.
    If the product has length greater than $2$, then we now use the fact that $C$ is normalised by $A_0$ to move a letter $y_i$ past an adjacent $x_i$ or $x_{i+1}$, collect together common terms and obtain a shorter product, a contradiction.
\end{proof}

\begin{proof}[Proof of \cref{lem:KWcopyFPA}]
We first prove that $C$ is malnormal in $A$ or $B$.
Suppose otherwise.
Then there exist $g\in A\smallsetminus C$, $h\in B\smallsetminus C$ such that $C^g \cap C \neq 1$ and $C^h \cap C \neq 1$.
Let $\overline{C} = C_G(C)$.
Combining \cref{lem:maxab_is_centraliser} and \cref{thm:CSA}, $\overline{C}$ is maximal abelian and hence malnormal.
Thus both $g \in \overline{C}$ and $h \in \overline{C}$, so $g$ and $h$ commute.
This is a contradiction, as they are contained in different free factors of $A\ast_CB$ and neither is contained in $C$.
Hence, $C$ is malnormal in $A$ or $B$.

Without loss of generality, assume that $C$ is malnormal in $B$. Then, by a result of Kapovich and Weidmann \cite[Corollary 3.2]{Kapovich1999TwoGen}, there exists a generating pair $(f,s)$ for $G$ such that $f \in A$ and $f^{z_1} \in C \smallsetminus\{1\}$ for some $z_1\in\mathbb{Z}$ and such that one of the following holds:
\begin{enumerate}[label=(\roman*)]
\item\label{KWTrees1}
$s\in B\smallsetminus C$;
\item\label{KWTrees2}
$s = ab$ with $a\in A\smallsetminus C$ and $b \in B\smallsetminus C$ and $a^{-1}f^{z_2}a\in C\smallsetminus\{1\}$ for some $z_2\in\mathbb{Z}$;
\item\label{KWTrees3}
$s = bab^{-1}$ with $a\in A\smallsetminus C$ and $b\in B\smallsetminus C$ and $a^{z_2} \in C\smallsetminus\{1\}$ for some $z_2\in\mathbb{Z}$.
\end{enumerate}
We first prove that if either of cases \ref{KWTrees2} or \ref{KWTrees3} hold then there exists a generating pair which satisfies \ref{KWTrees1}.

Suppose case \ref{KWTrees3} holds.
As $C$ is locally cyclic, $f$ and $a$ have a common power.
Hence, $\langle a, f\rangle$ has non-trivial centre and so is cyclic (by algebraic hyperbolicity).
Write $f'$ for the generator of $\langle a, f\rangle$, and note that as $a, f\in A$ we have $f'\in A$. Therefore, $G=\langle f', s\rangle$. As $s=bab^{-1}$ with $a\in\langle f'\rangle$, we further have that $G=\langle f', b\rangle$. Then the generating pair $(f', b)$ satisfies the conditions of case \ref{KWTrees1}.

Suppose case \ref{KWTrees2} holds.
Now, both $(a^{-1}f^{z_2}a)^{z_1}=a^{-1}f^{z_1z_2}a$ and $(f^{z_1})^{z_2}$ are contained in $C\leqslant \overline{C}$. As $\overline{C}$ is malnormal we have that $a\in\overline{C}$. Therefore, $f^{z_1}$ commutes with both $a$ and $f$, and so $f^{z_1}$ is contained in the centre of the group $\langle a, f\rangle$, and so $\langle a, f\rangle$ is cyclic by algebraic hyperbolicity. Write $f'$ for the generator of $\langle a, f\rangle$, and note that $f'\in A$. Therefore, $G=\langle f', s\rangle$. As $s=ab$ with $a\in\langle f'\rangle$, we further have that $G=\langle f', b\rangle$. Then the generating pair $(f', b)$ satisfies the conditions of case \ref{KWTrees1}.

Therefore, $G$ is generated by a pair $(f, s)$ satisfying the conditions of case \ref{KWTrees1}, so $f\in A$, $f^{z_1}\in C$ and $s\in B\smallsetminus C$.
The malnormal subgroup $\overline{C}=C_G(C)$ of $G$ is also a subgroup of $A$ as $C$ is malnormal in $B$ \cite[Theorem~1]{karrass1971free} (in the terminology of Karrass and Solitar's paper, $G$ is ``$1$-step malnormal'').
Assume for the sake of contradiction that $\overline{C}\lneq A$.
Then $G=A\ast_{\overline{C}}\overline{B}$ where $\overline{B}=\overline{C}\ast_CB$. Then $\overline{C}$ is malnormal in $A$ while its image is malnormal in $\overline{B}$. This is a contradiction as such a product cannot be generated by two elements
\cite[Lemmas 2 \& 3]{norwood1982every} (see also \cite[Corollary 3.2]{bleiler1998free}).
Hence, $A=\overline{C}$ and is locally cyclic.

We now prove that $C$ has finite index in $A$.
To see this, note that $C$ is a proper normal subgroup of $A$ (as $A=\overline{C}$ is abelian).
By the universal property of the amalgam $G = A\ast_CB$, there is a natural homomorphism $G \to A/C$ (under which $B$ has trivial image).
Thus $A/C$ is finitely generated, so since $A$ is locally cyclic, $A/C$ is a quotient of $\mathbb{Z}$.
It cannot be that $A/C \cong \mathbb{Z}$, as then $A \cong C \rtimes \mathbb{Z}$, contradicting algebraic hyperbolicity.
Thus $A / C$ is finite cyclic and in particular $[A:C] < \infty$.

The above proves that $C$ is malnormal in $A$ or $B$, and if it is malnormal in $B$ then it has finite index in $A$ and $A$ is locally cyclic.
Note that $B$ cannot also be locally cyclic, as $B$ contains a proper, non-trivial malnormal subgroup (namely $C$).
This therefore implies the initial part of the proposition: either $A$ or $B$ is locally cyclic, and if it is $A$ which is locally cyclic then (\ref{KWcopyFPA1}) holds.

We now prove (\ref{KWcopyFPA2}).
Recall that case \ref{KWTrees1} holds, so take $a:=f\in A$ and $b:=s\in B$ for our generating pair of $G$, and write $c:=a^n$ where $n=|A:C|$.
Note that the natural map $G \to A / C$ killing $B$ must send $a$ to a generator of $A / C \cong \mathbb{Z} / n$ (as the other element of the generating pair is sent to $1$), so $c$ is a generator of $\langle a \rangle \cap C$.
The result $B = \langle b, c \rangle$ now follows immediately from \Cref{lem:generating_amalgam_factor} setting $A_0 = \langle a \rangle$ and $B_0 = \langle b \rangle$.

To see that (\ref{KWcopyFPA3}) holds, assume first that $G$ is not one-ended.
Note that $B$ is non-abelian, as it contains a proper, non-trivial malnormal subgroup (namely $C$), and so $G$ is non-abelian.
Then $G$ is a free group of rank two since $G$ is non-abelian, torsion-free, two-generated and not one-ended -- this uses Stallings's theorem \cite{Stallings1971book} that a finitely generated group with more than one end splits as an amalgam or HNN extension over a finite subgroup and Grushko's theorem \cite{Grushko1940bases} that rank is additive under free products. It follows that $B < G$ is also free. Note that $B$ is not cyclic since it has a proper malnormal (locally) cyclic subgroup $C$. Thus $B$ is a non-abelian free group (in fact, of rank 2 by (\ref{KWcopyFPA2})), and therefore not one-ended.

Suppose now that $B$ is not one-ended. Then, as above, $B$ is a free group of rank two. Moreover, since $B = \langle b,c\rangle$, then $B$ is in fact free in $b,c$, that is $B = F(b,c)$, as finitely generated free groups are Hopfian. Then $C$ is necessarily cyclic, from which it follows that 
\[
G = \langle a \rangle\ast_{a^n=c} F(b,c) = F(a,b).
\]
 Thus, $G$ is a free group of rank two and is not one-ended.
\end{proof}

\subsection{HNN-extensions}
We next consider the case of $G$ being an HNN extension over locally cyclic subgroups.
\Cref{lem:KWcopyHNN} generalises Proposition 3.8 of \cite{Kapovich1999structure}; again, it incorporates more groups and allows splittings across locally cyclic subgroups.
If, as we shall do later, we assume that $A$ and $B$ are cyclic then one of them ($B$, say) is malnormal in $H$, so $C_H(B)=B$ is cyclic, but $C_H(A)$ may fail to be finitely generated.

If $G=H\ast_{A^t=B}$ is an HNN-extension of $H$, then Britton's Lemma gives a necessary condition for a word $W$ over $H$ and $t^{\pm1}$ to be trivial in $G$ \cite[Section IV.2]{LS}.
We use this condition in the following proof, and it is as follows: If $W=h_0t^{\epsilon_1}h_1\cdots h_{n-1}t^{\epsilon_n}h_n$, with $\epsilon_i=\pm1$ and $h_i\in H$, represents the trivial element of $G$ then there exists some index $i$ such that $t^{\epsilon_{i}}=-1$, $t^{\epsilon_{i+1}}=1$ and $h_i\in A$, or $t^{\epsilon_{i}}=1$, $t^{\epsilon_{i+1}}=-1$ and $h_i\in B$. (This condition means that $t^{\epsilon_{i}} h_{i}t^{\epsilon_{i+1}}$ collapses to an element of $A$ or $B$, as appropriate.)

\begin{proposition}
\label{lem:KWcopyHNN}
Let $G=H\ast_{A^t=B}$ be a torsion-free AH group which can be generated by two elements.
Assume that $A$ and $B$ are non-trivial locally cyclic subgroups of $H$. Then one of $A$ or $B$ is malnormal in $H$. In the case that $B$ is malnormal (the case that $A$ is malnormal is analogous) we get the following:
\begin{enumerate}
\item\label{KWcopyHNN1}
The group $G$ has a generating pair $(th,a)$, where $h \in H$ and $a\in C_H(A)$.
\item\label{KWcopyHNN2}
The group $H$ is not locally cyclic.
\item\label{KWcopyHNN3}
If $A$ is cyclic or if $H$ is finitely generated, the group $H$ can be generated by two elements, specifically $\langle a, h^{-1}b'h\rangle = H$ where $b'=t^{-1}a^nt\in B$ for some $n\in\mathbb{Z}\smallsetminus\{0\}$.
\item\label{KWcopyHNN4}
The group $G$ is one-ended if and only if $H$ is one-ended.
\end{enumerate}
\end{proposition}

We again extract part of the argument from \cite{Kapovich1999structure} and generalize it, both for clarity and ease of reuse.

\begin{lemma}
    \label{lem:generating_hnn_factor}
    Let $G = H \ast_{A^t = B}$ be a HNN extension and suppose that $H_0 \leqslant H$ is such that $G = \langle H_0, t \rangle$.
    Then $H = \langle H_0, A, B \rangle$.
\end{lemma}

\begin{proof}[Proof of \cref{lem:generating_hnn_factor}]
    Let $H_1 = \langle H_0, A, B \rangle \leqslant H$.
    We claim that $H_1 = H$.
    Let $h \in H$ be arbitrary.
    Note that $G = \langle H_1, t \rangle$ so we can write $h$ as a word \[
        h = h_0 t^{\epsilon_1} h_1 t^{\epsilon_2} \dots h_n
    \] with $h_i \in H_1$ and $\epsilon_i = \pm 1$.
    Consider such an expression with $n$ minimal.
    We now assume $h \notin H_1$ so that $n \geqslant 1$.
    Now, since $h^{-1} h_0 t^{\epsilon_1} \dots h_n = 1$, Britton's lemma tells us there is a \emph{pinch} subword of the form $t^{\epsilon_i} h_i t^{\epsilon_{i+1}}$ with either $\epsilon_i = -1, h_i \in A, \epsilon_{i+1} = 1$ or $\epsilon_i = 1, h_i \in B, \epsilon_{i+1} = -1$.
    In either case, the pinch represents an element of $H_1$ (specifically, of $B$ or $A$ respectively) and we can replace it with such an element to reduce word length, a contradiction.
\end{proof}

\begin{proof}[Proof of \cref{lem:KWcopyHNN}]
The group $G$ is CSA, by Theorem \ref{thm:CSA}, and hence so is the subgroup $H\leqslant G$.
Combining \cref{lem:maxab_is_centraliser} and \cref{thm:CSA}, the centralisers $C_G(A)$, $C_G(B)$ and $C_H(A)$, $C_H(B)$ are malnormal subgroups of $G$ and $H$ respectively.
We also have that all four subgroups are locally cyclic.

We now prove that one of $A$ or $B$ is malnormal in $H$.
Suppose otherwise.
Since $C_H(A)$ and $C_H(B)$ are malnormal in $H$, this means that $A\lneq C_H(A)$ and $B\lneq C_H(B)$.
Thus, there exist elements $a_1\in C_H(A)\smallsetminus A$ and $b_1\in C_H(B)\smallsetminus B$.
As $C_H(B)$ is locally cyclic and $G$ is torsion-free, there exists $n\in\mathbb{Z}$ such that $b_1^n\in B \smallsetminus \{1\}$.
Now $t b_1^n t^{-1} \in A \smallsetminus \{1\}$ and thus commutes with $a_1$.
As $H$ is commutative-transitive, we have that $a_1$ and $tb_1t^{-1}$ commute. However, by Britton's Lemma $a_1$ and $tb_1t^{-1}$ do not commute, a contradiction. Therefore, one of $A$ or $B$ is malnormal in $H$; suppose that $B$ is malnormal, so that $B=C_H(B)$.

We write $\overline{A}:=C_H(A)$.
We now prove that there exists no $h\in H$ such that $h^{-1}\overline{A}h\cap B\neq1$ (that is, $\overline{A}$ and $B$ are \emph{conjugacy separated in $H$}). Suppose such an $h\in H$ exists. Then there exists some element $c_0\in \overline{A}$ and integers $m, n\in\mathbb{Z}\smallsetminus\{0\}$ such that $th^{-1}c_0^mht^{-1}=c_0^n$. As $C_G(A)$ is malnormal in $G$ we have that $th^{-1}\in C_G(A)$, and as $C_G(A)$ is locally cyclic there exist integers $i, j\in\mathbb{Z}\smallsetminus\{0\}$ such that $(th^{-1})^ic_0^j=1$ in $G$. This is impossible by considering the natural surjection $G \to \mathbb{Z}$ under which $H$ has trivial image as that word gets sent to $i \neq 0$. Hence, by contradiction, $\overline{A}$ and $B$ are conjugacy separated in $H$.
Then $G$ has a generating pair $(th,a_0)$, where $h \in H$ and $a_0\in \overline{A}=C_H(A)$ \cite[Corollary 3.1]{Kapovich1999TwoGen}, and so (\ref{KWcopyHNN1}) holds.
Moreover, locally cyclic groups clearly do not contain two conjugacy separated non-trivial subgroups, and so (\ref{KWcopyHNN2}) holds. If $H$ is not finitely generated, we take $a:=a_0$. If $H$ is finitely generated, then we will take $a$ to be a specific root of $a_0$ as constructed in the following paragraph; this is necessary as the choice of $a$ connects to (\ref{KWcopyHNN3}).

We now prove (\ref{KWcopyHNN3}).
For notational convenience we do away with the conjugating element $h$ introduced above in the generating pair $(th, a_0)$: let $s = th$ and let $C = B^h$.
Then we can just as well write $G = H \ast_{A^s = C}$, as for example follows immediately from the presentations of the two HNN extensions.
We now apply \Cref{lem:generating_hnn_factor} to the HNN extension $H \ast_{A^s = C}$ with $H_0 = \langle a_0 \rangle$ to conclude that $H = \langle a_0, A, C \rangle$.
If $A$ is cyclic then so is $C$ and we conclude that $H$ is finitely generated anyway.
Choose a finite generating set for $H$ and words in $\{a_0\}, A, C$ representing the generators, letting the factors used be $a_1, \dots, a_k \in A$ and $c_1, \dots, c_l \in C$.
As $C^{s^{-1}} = A \leqslant C_H(A)$ and $a_0 \in C_H(A)$, which is a locally cyclic group, we can take $a \in C_H(A)$ as a generator of the cyclic group $\langle a_0, a_1, \dots, a_k, c_1^{s^{-1}}, \dots, c_l^{s^{-1}} \rangle$.
Then some power $a^n$ is a generator of the cyclic group $\langle c_1^{s^{-1}}, \dots, c_l^{s^{-1}} \rangle$ so that $s^{-1} a^n s$ is a generator of $\langle c_1, \dots, c_l \rangle$.
As $a^n \in A$, we set $b' = t^{-1} a^n t \in B$ and (recalling $s = th$) conclude that $\langle a, h^{-1} b' h \rangle = H$ as claimed.

To see \eqref{KWcopyHNN4}, suppose first that  $H$ is freely decomposable.
As $H$ is not locally cyclic, and so non-abelian, but can be generated by the pair $(a,b)$ with $b = h^{-1} b' h$, we have that $H$ is free with basis $(a,b)$, i.e.\ $H = F(a,b)$. Hence, $\overline{A}$ must be generated by $a$, $B$ must be generated by $b'=hbh^{-1}$, and so
\[
G =\langle H,s\mid s^{-1}a^ns = b \rangle=\langle a,b,s\mid s^{-1}a^ns = b \rangle=\langle a,s\mid \rangle= F(a,s)
\]
is a free group of rank two, and hence is not one ended.

Suppose now that $G$ is freely decomposable, i.e.\ that $G$ is a free group of rank two. Since $H$ is a two-generated non-cyclic subgroup of a free group $G$, $H$ itself is a free group of rank two, as required.
\end{proof}

\subsection{Abelian splittings}
We now combine the above two results to classify the abelian splittings of two-generated AH groups.
For this we need to precisely define the ``fundamental group of a graph of groups'', and related notions.

\p{Graphs of groups}
\label{gogs}
We use Serre's definition of a graph \cite{Trees}, so a graph $\Gamma$ has a set of vertices $V\Gamma$, while the edge set $E\Gamma$ consists of oriented ``half edges'', so each $e\in E\Gamma$ has an \emph{initial vertex} $o(e)$ and a \emph{terminal vertex} $t(e)$, and an inverse edge $\overline{e}$, with $e \neq\overline{e}$, $o(e) = t(\overline{e})$, and $t(e) = o(\overline{e})$.

A \emph{graph of groups} $\mathbf{\Gamma}$ consists of a connected graph $\Gamma$, a family of \emph{vertex groups} $\{\Gamma_v\mid v\in V \Gamma\}$, a family of \emph{edge groups} $\{\Gamma_e\mid e \in E\Gamma\}$ with $\Gamma_e = \Gamma_{\overline{e}}$ for all $e \in E\Gamma$, and a family of injections
\[
\alpha_e \colon \Gamma_e \longrightarrow \Gamma_{o(e)}
\]
for every $e \in E\Gamma$.

\p{Fundamental group of a graph of groups}
The ``graph of groups'' data $\mathbf{\Gamma}$ defines a group as follows.
The construction begins with a choice, but this does not change the isomorphism class of the group:
Let $T$ be a maximal subtree of $\Gamma$, the graph underlying $\mathbf{\Gamma}$.
The \emph{fundamental group of the graph of groups $\mathbf{\Gamma}$ with respect to the maximal subtree $T$} is the group
\[
\pi_1(\mathbf{\Gamma}, T):=(\ast_{v\in V\Gamma}\Gamma_v \ast F(E\Gamma))/N
\]
where $N$ is the normal closure of
\[
\{e\overline{e}\mid e \in E\Gamma\}\cup\{e\mid e\in T\}\cup\{e^{-1}\alpha_e(g)e = \alpha_{\overline{e}}(g)\mid e \in E\Gamma, g \in \Gamma_e\}.
\]
The reader is referred to Serre's book \cite{Trees} for more information about graphs of groups.

\p{Splitting, abelian splitting, essential splitting}
A \emph{splitting} of a group $G$ is a triple $(\mathbf{\Gamma},T,\psi)$ where $\mathbf{\Gamma}$ is a graph of groups, $T$ is a maximal subtree of $\Gamma$ and $\psi$ is an isomorphism
\[
\psi \colon \pi_1(\mathbf{\Gamma}, T) \longrightarrow G
\]
Below, we assume that $\psi$ is the identity map and identify $G$ with $\pi_1(\mathbf{\Gamma}, T)$.
A splitting $(\mathbf{\Gamma},T,\psi)$ of $G$ is called an \emph{abelian splitting} if all edge groups of $\mathbf{\Gamma}$ are abelian, and additionally is \emph{essential} if for every edge $e\in E\Gamma$ the subgroup $\alpha_e(\Gamma_e)$ is of infinite index in $A_{o(e)}$.

\p{Abelian splittings of AH groups}
We can now state and prove the main result of this section.
The result is especially strong when the fundamental group of the graph of groups carried by $T$ is finitely generated.
We do not expect this to happen in general, although it does happen when the splitting is cyclic; see below.

\begin{theorem}
\label{thm:TwoGenJSJ}
Let $G$ be a one-ended torsion-free AH group which can be generated by two elements. Let $(\mathbf{\Gamma},T, \psi)$ be an essential abelian splitting of $G$.
Then $(\mathbf{\Gamma},T, \psi)$ is a locally cyclic splitting, and
if the splitting is non-trivial then the rank of the fundamental group of the underlying graph $\Gamma$ is precisely $1$. Moreover, if the rank is $1$ and the fundamental group of the graph of groups carried by $T$ is finitely generated, then $T$ consists solely of a single vertex $v$, and
the vertex group $\Gamma_v$ is also a one-ended torsion-free AH group generated by two elements.
\end{theorem}

\begin{proof}
As $G$ is AH, abelian subgroups are locally cyclic. Therefore, $(\mathbf{\Gamma},T, \psi)$ is a locally cyclic splitting.
The proof
that the graph ${\Gamma}$ consists of either a single vertex with no edges or a single vertex with a single edge is essentially identical to the proof of \cite[Theorem 3.9]{Kapovich1999structure}, with the main differences being that the word ``cyclic'' should be replaced with ``locally cyclic'',
and we use \Cref{lem:KWcopyFPA,lem:KWcopyHNN} in place of their Propositions 3.7 and 3.8, respectively.
For completeness, we include our altered proof.

We will assume for convenience that $\psi = \mathrm{id}$. Let $E'$ denote the (possibly infinite) set of
positively oriented edges of ${\Gamma}$ lying outside the maximal subtree $T$.
We show first that $|E'| \leqslant 1$: It is clear that the map that quotients out the vertex groups ${\Gamma}_v$ for all $v\in V {\Gamma}$ defines a surjective homomorphism
\[\psi \colon \pi_1(\mathbf{\Gamma},T) \rightarrow F(E')\]
to the free group with basis $E'$. Hence the free group $F(E')$ is a homomorphic image of $G$. Since $G$ is a one-ended two-generated group, $G$ is a proper quotient of the free group of rank two $F_2$. Hence $F(E')$ is a proper quotient of $F_2$. This is impossible if $|E'|\geqslant 2$, since $F_2$ is Hopfian. Thus $|E'| \leqslant 1$.

We now write $n:=|E'|$, noting that it is $0$ or $1$.

Let us suppose $n=0$, and that $\Gamma=T$ has at least two vertices.
Then $T$ contains an edge $e$ with $v_0 = \iota(e) \neq \tau(e) = v_1$, and  $v_0,v_1 \in VT$. Thus $T \smallsetminus e$ consists of two connected components, $F_0$, containing $v_0$ and $F_1$, containing $v_1$. Let $\mathbb{F}$ be the graph of groups obtained from $T$ by collapsing $F_0$ and $F_1$. Clearly the underlying graph $F$ of $\mathbb{F}$ is just the edge $e$. It follows that $G \cong \pi_1(\mathbb{F},F)$, i.e.\ $G \cong K \ast_{C_0 = C_1} L$ where $C_0 = C_1$ is a locally cyclic group that is of infinite index in $K$ and $L$. By  \Cref{lem:KWcopyFPA} this is not possible since $G$ is 2-generated. Thus $T$ cannot have more than one vertex, so the assertion for $n=0$ is proved.

Now let us suppose that $n=1$.
 The graph $T$ is endowed with the structure of a graph of groups $\mathbf{T}$ inherited from $\mathbf{\Gamma}$. Let $H = \pi_1(\mathbf{T},T)$, and assume that $H$ is finitely generated. Since $n = 1$, we have $G =\langle H,e\mid e^{-1}C_1e = C_2 \rangle$ where $C_1$ and $C_2$ are locally cyclic subgroups of infinite index in $H$.
Hence by \Cref{lem:KWcopyHNN} the group $H$ is two-generated.
Finally, we may argue as in the $n=0$ case, so replacing $G$ with $H$ in the above paragraph, to get that $T$ cannot have more than one vertex. The result follows.


The vertex group $H$ is a torsion-free AH group, as these properties are preserved when taking subgroups, and is one-ended by \Cref{lem:KWcopyHNN}.
\end{proof}

\subsection{Cyclic splittings}
If we assume that the splitting in \cref{thm:TwoGenJSJ} is a cyclic splitting, we get a stronger structural result.
For this and other reasons, it is interesting and useful to understand when the splitting of \cref{thm:TwoGenJSJ} is in fact a cyclic splitting.
We now give the strengthened version of \cref{thm:TwoGenJSJ} for cyclic splittings, as well as two results, each of which concludes that an essential abelian splitting is in fact a cyclic splitting.

\begin{corollary}
Let $G$ be a one-ended torsion-free AH group which can be generated by two elements.
Let $(\mathbf{\Gamma},T, \psi)$ be an essential cyclic splitting of $G$.
The one of the following holds:
\begin{enumerate}
\item The graph $\Gamma$ consists of a single vertex with no edges.
\item The graph $\Gamma$ consists of a single vertex $v$ and a single edge $e$. In this case the vertex group $\Gamma_v$ is also a one-ended torsion-free AH group which can be generated by two elements.
\end{enumerate}
\end{corollary}

\begin{proof}
By \Cref{thm:TwoGenJSJ}, if the splitting $(\mathbf{\Gamma},T, \psi)$ is non-trivial then $G$ splits as an HNN-extension $H\ast_{A^t=B}$ where $A$ and $B$ are edge groups of the original splitting and where $H$ is the fundamental group of the graph of groups carried by $T$.
By \Cref{lem:KWcopyHNN}, as $A$ and $B$ are cyclic the group $H$ is two-generated.
The result now follows from \Cref{thm:TwoGenJSJ}.
\end{proof}

We first assume that $G$ has maximum-rank abelianisation.

\begin{corollary}
Let $G$ be a one-ended torsion-free AH group which can be generated by two elements, and additionally assume that the abelianisation is $\mathbb{Z}^2$.
Let $(\mathbf{\Gamma},T, \psi)$ be an essential abelian splitting of $G$.
Then $(\mathbf{\Gamma},T, \psi)$ is a cyclic splitting.
\end{corollary}

\begin{proof}
By Theorem \ref{thm:TwoGenJSJ}, if the splitting $(\mathbf{\Gamma},T, \psi)$ is non-trivial then $G$ splits as an HNN-extension $H\ast_{A^t=B}$ where $A$ and $B$ are edge groups of the original splitting and where $H$ is the fundamental group of the graph of groups carried by $T$.
Suppose $A$ and $B$ are non-cyclic.
As $G$ is AH, the centralisers $C_H(A)$ and $C_H(B)$ are locally cyclic, and hence also non-cyclic locally cyclic.
This means that any homomorphisms $C_H(A)\to\mathbb{Z}^2$ and $C_H(B)\to\mathbb{Z}^2$ must have trivial image, and so as $G$ has $\mathbb{Z}^2$ abelianisation we have that $C_H(A)$ and $C_H(B)$ are contained in the derived subgroup of $G$.
By \Cref{lem:KWcopyHNN}, $G$ has generating pair $(th, a)$ where $h\in H$ and $a\in C_H(A)$.
As $a$ is in the derived subgroup of $G$, it follows that $G$ has cyclic abelianisation.
This is a contradiction, and so we conclude that $A$ and $B$ are cyclic, and hence that the splitting is cyclic.
\end{proof}

We next combine finite presentability with properties of the splitting itself.
Here, we need a preliminary lemma.

\begin{lemma}
\label{lem:HNNfgsubgroups}
Suppose a finitely presented group $G$ splits as an HNN-extension $H\ast_{A^t=B}$, with $H$ finitely generated. Then $A$ and $B$ are finitely generated.
\end{lemma}

\begin{proof}
Write $\phi\colon A\rightarrow B$ for the isomorphism associated to the HNN-extension $G=H\ast_{A^t=B}$.
Suppose $A$ is not finitely generated, with generating set $\mathcal{A}$, and let $\langle \mathbf{x}\mid\mathbf{r}\rangle$ be a presentation of $H$, with $\mathbf{x}$ finite.
Then $G$ has presentation:
\[
\mathcal{P}=\langle \mathbf{x}, t\mid \mathbf{r}, t^{-1}at=\phi(a), a\in\mathcal{A}\rangle.
\]
As $G$ is finitely presented, a finite subset $\mathbf{s}\subset\{\mathbf{r}, t^{-1}at=\phi(a), a\in\mathcal{A}\}$ of the relators has the same normal closure in $F(\mathbf{x}, t)$ as the whole set of relators, $\langle\!\langle\mathbf{s}\rangle\!\rangle=\langle\!\langle\mathbf{r}, t^{-1}at=\phi(a), a\in\mathcal{A}\rangle\!\rangle$.
Hence, there exists a finite subset $\mathcal{A'}\subset\mathcal{A}$ such that
\[
\langle\!\langle\mathbf{r}, t^{-1}at=\phi(a), a\in\mathcal{A}\rangle\!\rangle=\langle\!\langle\mathbf{r}, t^{-1}at=\phi(a), a\in\mathcal{A'}\rangle\!\rangle.
\]
The presentation $\langle \mathbf{x}, t\mid\mathbf{r}, t^{-1}at=\phi(a), a\in\mathcal{A'}\rangle$ is an HNN-extension $H\ast_{A_1^t=B_1}$ where $A_1, B_1$ are proper subgroups of $A, B$ respectively.
Therefore, for any element $a\in A\smallsetminus A_1$, we have that $t^{-1}at\phi(a)^{-1}\neq1$ by Britton's Lemma.
On the other hand, in the infinite presentation $\mathcal{P}$ we have that $t^{-1}at\phi(a)^{-1}=1$.
As both sets of relators have the same normal closure in $F(\mathbf{x}, t)$, this is a contradiction.
\end{proof}

We now consider a given splitting, and assume that vertex groups are finitely generated.

\begin{corollary}
Let $G$ be a one-ended torsion-free AH group which can be generated by two elements, and additionally assume that $G$ is finitely presented.
Let $(\mathbf{\Gamma},T, \psi)$ be an essential abelian splitting of $G$, such that every vertex stabiliser is finitely generated.
Then $(\mathbf{\Gamma},T, \psi)$ is a cyclic splitting.
\end{corollary}

\begin{proof}
The assumption that every vertex stabiliser is finitely generated implies that the graph of groups carried by $T$ is finitely generated.
Thus, by Theorem \ref{thm:TwoGenJSJ}, if the splitting is non-trivial then it corresponds to an HNN-extension $H\ast_{A^t=B}$ with $H$ finitely generated.
Therefore, by Lemma \ref{lem:HNNfgsubgroups}, $A$ and $B$ are finitely generated, and so cyclic, as required.
\end{proof}

\bibliographystyle{amsalpha}
\bibliography{BibTexBibliography}

\providecommand{\bysame}{\leavevmode\hbox to3em{\hrulefill}\thinspace}
\providecommand{\MR}{\relax\ifhmode\unskip\space\fi MR }
\providecommand{\MRhref}[2]{%
  \href{http://www.ams.org/mathscinet-getitem?mr=#1}{#2}
}
\providecommand{\href}[2]{#2}
\begin{thebibliography}{LIMP24}

\bibitem[Adi71]{Adian1971Certain}
S.~I. Adian, \emph{Certain torsion-free groups}, Izv. Akad. Nauk SSSR Ser. Mat. \textbf{35} (1971), 459--468. \MR{0283070}

\bibitem[AG99]{Allcock1999homological}
D.~J. Allcock and S.~M. Gersten, \emph{A homological characterization of hyperbolic groups}, Invent. Math. \textbf{135} (1999), no.~3, 723--742. \MR{1669272}

\bibitem[Bau71]{Baumslag1971Finitely}
Gilbert Baumslag, \emph{Finitely generated cyclic extensions of free groups are residually finite}, Bull. Austral. Math. Soc. \textbf{5} (1971), 87--94. \MR{311776}

\bibitem[Bes04]{Bestvina2004questions}
Mladen Bestvina, \emph{Questions in geometric group theory}, \url{http://www.math.utah.edu/~bestvina/eprints/questions-updated.pdf}.

\bibitem[BH99]{BH1999Metric}
Martin~R. Bridson and Andr\'{e} Haefliger, \emph{Metric spaces of non-positive curvature}, Grundlehren der Mathematischen Wissenschaften [Fundamental Principles of Mathematical Sciences], vol. 319, Springer-Verlag, Berlin, 1999. \MR{1744486}

\bibitem[Bie81]{Bieri1981Homological}
Robert Bieri, \emph{Homological dimension of discrete groups}, second ed., Queen Mary College Mathematical Notes, Queen Mary College, Department of Pure Mathematics, London, 1981. \MR{715779}

\bibitem[BJ98]{bleiler1998free}
Steven~A Bleiler and Amelia~C Jones, \emph{The free product of groups with amalgamated subgroup malnormal in a single factor}, Journal of Pure and Applied Algebra \textbf{127} (1998), no.~2, 119--136.

\bibitem[Bra99]{Brady1999Branched}
Noel Brady, \emph{Branched coverings of cubical complexes and subgroups of hyperbolic groups}, J. London Math. Soc. (2) \textbf{60} (1999), no.~2, 461--480. \MR{1724853}

\bibitem[Bri00]{Brinkmann2000hyperbolic}
P.~Brinkmann, \emph{Hyperbolic automorphisms of free groups}, Geom. Funct. Anal. \textbf{10} (2000), no.~5, 1071--1089. \MR{1800064}

\bibitem[BS78]{Bieri1978almost}
Robert Bieri and Ralph Strebel, \emph{Almost finitely presented soluble groups}, Comment. Math. Helv. \textbf{53} (1978), no.~2, 258--278. \MR{498863}

\bibitem[But15]{button2015balanced}
J.~O. Button, \emph{Balanced groups and graphs of groups with infinite cyclic edge groups}, arXiv:1509.05688 (2015).

\bibitem[CFR16]{Ciobanu2016Classes}
L.~Ciobanu, B.~Fine, and G.~Rosenberger, \emph{Classes of groups generalizing a theorem of {B}enjamin {B}aumslag}, Comm. Algebra \textbf{44} (2016), no.~2, 656--667. \MR{3449945}

\bibitem[CH09]{Caprace2009geometric}
Pierre-Emmanuel Caprace and Fr\'{e}d\'{e}ric Haglund, \emph{On geometric flats in the {CAT}(0) realization of {C}oxeter groups and {T}its buildings}, Canad. J. Math. \textbf{61} (2009), no.~4, 740--761. \MR{2541383}

\bibitem[DG08]{Dahmani2008isomorphism}
Fran\c{c}ois Dahmani and Daniel Groves, \emph{The isomorphism problem for toral relatively hyperbolic groups}, Publ. Math. Inst. Hautes \'{E}tudes Sci. (2008), no.~107, 211--290. \MR{2434694}

\bibitem[DG11]{Dahmani2011isomorphism}
Fran\c{c}ois Dahmani and Vincent Guirardel, \emph{The isomorphism problem for all hyperbolic groups}, Geom. Funct. Anal. \textbf{21} (2011), no.~2, 223--300. \MR{2795509}

\bibitem[For73]{Formanek73noetherian}
Edward Formanek, \emph{Idempotents in {N}oetherian group rings}, Canadian J. Math. \textbf{25} (1973), 366--369. \MR{316494}

\bibitem[Gil79]{Gildenhuys1979Classification}
Dion Gildenhuys, \emph{Classification of soluble groups of cohomological dimension two}, Math. Z. \textbf{166} (1979), no.~1, 21--25. \MR{526863}

\bibitem[GKL21]{gardam2021jsj}
Giles Gardam, Dawid Kielak, and Alan~D Logan, \emph{{JSJ} decompositions and polytopes for two-generator one-relator groups}, arXiv:2101.02193 (2021).

\bibitem[GKM95]{Gildenhuys1995CSA}
D.~Gildenhuys, O.~Kharlampovich, and A.~Myasnikov, \emph{C{SA}-groups and separated free constructions}, Bull. Austral. Math. Soc. \textbf{52} (1995), no.~1, 63--84. \MR{1344261}

\bibitem[GL17]{Guirardel2017JSJ}
Vincent Guirardel and Gilbert Levitt, \emph{J{SJ} decompositions of groups}, Ast\'{e}risque (2017), no.~395, vii+165. \MR{3758992}

\bibitem[Gro87]{Gromov1987Essay}
M.~Gromov, \emph{Hyperbolic groups}, Essays in group theory, Math. Sci. Res. Inst. Publ., vol.~8, Springer, New York, 1987, pp.~75--263. \MR{919829}

\bibitem[Gru40]{Grushko1940bases}
I.~A. Grushko, \emph{On the bases of a free product of groups}, Matematicheskii Sbornik \textbf{8(50)} (1940), 169--182. \MR{3412}

\bibitem[GW19]{GardamWoodhouse19}
Giles Gardam and Daniel~J. Woodhouse, \emph{The geometry of one-relator groups satisfying a polynomial isoperimetric inequality}, Proc. Amer. Math. Soc. \textbf{147} (2019), no.~1, 125--129. \MR{3876736}

\bibitem[How82]{Howie1982Locally}
James Howie, \emph{On locally indicable groups}, Math. Z. \textbf{180} (1982), no.~4, 445--461. \MR{667000}

\bibitem[IMM23]{Italiano2023hyperbolic}
Giovanni Italiano, Bruno Martelli, and Matteo Migliorini, \emph{Hyperbolic 5-manifolds that fiber over {$S^1$}}, Invent. Math. \textbf{231} (2023), no.~1, 1--38. \MR{4526820}

\bibitem[Kro21]{Kropholler2021subgroups}
Robert Kropholler, \emph{Hyperbolic groups with finitely presented subgroups not of type {$F_3$}}, Geom. Dedicata \textbf{213} (2021), 589--619. \MR{4278346}

\bibitem[KS71]{karrass1971free}
Abraham Karrass and Donald Solitar, \emph{The free product of two groups with a malnormal amalgamated subgroup}, Canadian Journal of Mathematics \textbf{23} (1971), no.~5, 933--959.

\bibitem[KW99a]{Kapovich1999structure}
Ilya Kapovich and Richard Weidmann, \emph{On the structure of two-generated hyperbolic groups}, Math. Z. \textbf{231} (1999), no.~4, 783--801. \MR{1709496}

\bibitem[KW99b]{Kapovich1999TwoGen}
\bysame, \emph{Two-generated groups acting on trees}, Arch. Math. (Basel) \textbf{73} (1999), no.~3, 172--181. \MR{1705011}

\bibitem[KW00]{Kapovich2000equivalence}
Ilya Kapovich and Daniel~T. Wise, \emph{The equivalence of some residual properties of word-hyperbolic groups}, J. Algebra \textbf{223} (2000), no.~2, 562--583. \MR{1735163}

\bibitem[LIMP24]{Llosa2021hyperbolic}
Claudio Llosa~Isenrich, Bruno Martelli, and Pierre Py, \emph{{Hyperbolic groups containing subgroups of type $\mathscr{F}_3$ not $\mathscr{F}_4$}}, Journal of Differential Geometry \textbf{127} (2024), no.~3, 1121 -- 1147.

\bibitem[LIP24]{Llosa2024hyperbolic}
Claudio Llosa~Isenrich and Pierre Py, \emph{Subgroups of hyperbolic groups, finiteness properties and complex hyperbolic lattices}, Invent. Math. \textbf{235} (2024), no.~1, 233--254. \MR{4688705}

\bibitem[Lod18]{Lodha2018Hyperbolic}
Yash Lodha, \emph{A hyperbolic group with a finitely presented subgroup that is not of type {$FP_3$}}, Geometric and cohomological group theory, London Math. Soc. Lecture Note Ser., vol. 444, Cambridge Univ. Press, Cambridge, 2018, pp.~67--81. \MR{3822289}

\bibitem[LS77]{LS}
Roger~C. Lyndon and Paul~E. Schupp, \emph{Combinatorial group theory}, Ergebnisse der Mathematik und ihrer Grenzgebiete, Band 89, Springer-Verlag, Berlin-New York, 1977. \MR{0577064}

\bibitem[Mal40]{mal1940faithful}
Anatoli{\i}~I Mal’cev, \emph{On the faithful representation of infinite groups by matrices}, Mat. Sb \textbf{8} (1940), no.~50, 405--422.

\bibitem[MR96]{myasnikov1996exponential}
A.~G. Myasnikov and V.~N. Remeslennikov, \emph{Exponential groups. {II}. {E}xtensions of centralizers and tensor completion of {CSA}-groups}, Internat. J. Algebra Comput. \textbf{6} (1996), no.~6, 687--711. \MR{1421886}

\bibitem[MT00]{MislinTalelli2000}
Guido Mislin and Olympia Talelli, \emph{On groups which act freely and properly on finite dimensional homotopy spheres}, Computational and geometric aspects of modern algebra ({E}dinburgh, 1998), London Math. Soc. Lecture Note Ser., vol. 275, Cambridge Univ. Press, Cambridge, 2000, pp.~208--228. \MR{1776776}

\bibitem[Mut21]{Mutanguha2021dynamics}
Jean~Pierre Mutanguha, \emph{The dynamics and geometry of free group endomorphisms}, Adv. Math. \textbf{384} (2021), 107714, 60. \MR{4237417}

\bibitem[Nor82]{norwood1982every}
FH~Norwood, \emph{Every two-generator knot is prime}, Proceedings of the American Mathematical Society \textbf{86} (1982), no.~1, 143--147.

\bibitem[Obr98]{Obraztsov1998Simple}
Viatcheslav~N. Obraztsov, \emph{Simple torsion-free groups in which the intersection of any two non-trivial subgroups is non-trivial}, J. Algebra \textbf{199} (1998), no.~1, 337--343. \MR{1489368}

\bibitem[Ol'79]{Olshanskii79noetherian}
A.~Yu. Ol'shanskii, \emph{An infinite simple torsion-free {N}oetherian group}, Izv. Akad. Nauk SSSR Ser. Mat. \textbf{43} (1979), no.~6, 1328--1393. \MR{567039}

\bibitem[Rem89]{Remeslennikov1989exists}
V.~N. Remeslennikov, \emph{{$\exists$}-free groups}, Sibirsk. Mat. Zh. \textbf{30} (1989), no.~6, 193--197. \MR{1043446}

\bibitem[RR02]{Rhemtulla2002suspect}
Akbar Rhemtulla and Dale Rolfsen, \emph{Local indicability in ordered groups: braids and elementary amenable groups}, Proc. Amer. Math. Soc. \textbf{130} (2002), no.~9, 2569--2577. \MR{1900863}

\bibitem[Sel95]{Sela1995isomorphism}
Z.~Sela, \emph{The isomorphism problem for hyperbolic groups. {I}}, Ann. of Math. (2) \textbf{141} (1995), no.~2, 217--283. \MR{1324134}

\bibitem[Sel09]{Sela2009Diophantine}
\bysame, \emph{Diophantine geometry over groups. {VII}. {T}he elementary theory of a hyperbolic group}, Proc. Lond. Math. Soc. (3) \textbf{99} (2009), no.~1, 217--273. \MR{2520356}

\bibitem[Ser80]{Trees}
Jean-Pierre Serre, \emph{Trees}, Springer-Verlag, Berlin-New York, 1980, Translated from the French by John Stillwell. \MR{607504}

\bibitem[Soz00]{Sozutov2000Residually}
A.~I. Sozutov, \emph{Residually finite groups with nontrivial intersections of pairs of subgroups}, Sibirsk. Mat. Zh. \textbf{41} (2000), no.~2, 437--441, iv. \MR{1762195}

\bibitem[Sta71]{Stallings1971book}
John Stallings, \emph{Group theory and three-dimensional manifolds}, Yale Mathematical Monographs, vol.~4, Yale University Press, New Haven, Conn.-London, 1971, A James K. Whittemore Lecture in Mathematics given at Yale University, 1969. \MR{415622}

\bibitem[Sta87]{Stallings1987Adian}
John~R. Stallings, \emph{Adian groups and pregroups}, Essays in group theory, Math. Sci. Res. Inst. Publ., vol.~8, Springer, New York, 1987, pp.~321--342. \MR{919831}

\bibitem[Tal12]{Talelli2012}
Olympia Talelli, \emph{On periodic (co)homology of groups}, Comm. Algebra \textbf{40} (2012), no.~3, 1167--1172. \MR{2899933}

\bibitem[Wis00]{Wise2000graphs}
Daniel~T. Wise, \emph{Subgroup separability of graphs of free groups with cyclic edge groups}, Q. J. Math. \textbf{51} (2000), no.~1, 107--129. \MR{1760573}

\end{thebibliography}
\end{document}